\newtheorem{thm}{Theorem}
\newtheorem{cor}[thm]{Corollary}
\newtheorem*{df}{Definition}
\numberwithin{equation}{section}
\newcommand{\boite}{\mbox{} \hfill \mbox{\rule{2mm}{2mm}}}
\newcommand{\T}		{\mathbb{T}}
\newcommand{\D}		{\mathbb{D}}
\newcommand{\R}		{\mathbb{R}}
\newcommand{\C}		{\mathbb{C}}
\newcommand{\N}		{\mathbb{N}}
\newcommand{\Z}		{\mathbb{Z}}
\newcommand{\op}	{\mathcal{P}}
\newcommand{\oq}	{\mathcal{Q}}
\newcommand{\oi}	{\mathcal{I}}
\newcommand{\oc}	{\mathcal{C}}
\newcommand{\os}	{\mathcal{S}}
\newcommand{\ot}	{\mathcal{T}}
\newcommand{\oh}	{\mathcal{H}}
\newcommand{\gm}	{G}
\newcommand{\szf}       {S}
\newcommand{\scf}       {c}
\newcommand{\dc}	{DC}
\newcommand{\cs}	{C(F)}
\newcommand{\map}	{\varphi}
\newcommand{\jt}	{J}
\newcommand{\Arg}	{\textnormal{Arg}}
\newcommand{\Log}	{\textnormal{Log}}
\newcommand{\ar}	{\theta}
\newcommand{\cp}	{\textnormal{cap}}
\newcommand{\E}		{\mathscr{E}}
\newcommand{\const}	{\textnormal{const.}}
\newcommand{\supp}	{\textnormal{supp}}
\newcommand{\diam}	{\textnormal{diam}}
\newcommand{\im}	{\textnormal{Im}}
\newcommand{\re}	{\textnormal{Re}}
\newcommand{\cws}	{\stackrel{*}{\to}}
\begin{document}

\title[Convergent Interpolation to Cauchy Integrals over Analytic Arcs]{Convergent Interpolation to Cauchy Integrals over Analytic Arcs}

\author[L. Baratchart]{Laurent Baratchart}

\address{INRIA, Project APICS \\
2004 route des Lucioles --- BP 93 \\
06902 Sophia-Antipolis, France}

\email{laurent.baratchart@sophia.inria.fr}

\author[M. Yattselev]{Maxim Yattselev}

\address{Corresponding author, \\
INRIA, Project APICS \\
2004 route des Lucioles --- BP 93 \\
06902 Sophia-Antipolis, France}

\email{myattsel@sophia.inria.fr}

\date{\normalsize \today}

\begin{abstract}
We consider multipoint Pad\'e approximation to Cauchy transforms of complex measures. We show that if the support of a measure is an analytic Jordan arc and if the measure itself is absolutely continuous with respect to the equilibrium distribution of that arc with Dini-smooth non-vanishing density, then the diagonal multipoint Pad\'e approximants associated with appropriate interpolation schemes converge locally uniformly to the approximated Cauchy transform in the complement of the arc. This asymptotic behavior of Pad\'e approximants is deduced from the analysis of underlying non-Hermitian orthogonal polynomials, for which we use classical properties of Hankel and Toeplitz operators on smooth curves. A construction of the appropriate interpolation schemes is explicit granted the parametrization of the arc.
\end{abstract}

\subjclass{42C05, 41A20, 41A21}

\keywords{non-Hermitian orthogonality, orthogonal polynomials with varying weights, strong asymptotics,  multipoint Pad\'e approximation.}

\maketitle

\section{Introduction}
\label{sec:intro}

Rational approximation to analytic functions of one complex variable is a most classical subject which has undergone many developments since C. Runge's proof that such an approximation is indeed possible, locally uniformly on the domain of holomorphy \cite{Run85}. Let us quote for example the deep study of those open sets for which holomorphic functions that extend continuously up to the boundary can be approximated by rational functions on the closure \cite{Mer62,Vit66}, see comprehensive expositions and further references in the monograph \cite{Gamelin}. In another connection, the achievable rate of convergence of rational approximants at regular points, when the degree goes large, also received considerable attention \cite{Walsh, Gon78c, LevLub86, Par86, Pr93}. Meantime, rational approximation has become a tool in numerical analysis \cite{Lev73, Wen89, TrWeidSchmel06}, as well as in the modelling and control of signals and  systems, see for instance \cite{Antoulas, CameronKudsiaMansour, B_CMFT99, Gl84, Partington2}.

From the constructive viewpoint, a great deal of interest has been directed to those rational functions of type\footnote{A rational function is said to be of type $(m,n)$ if it can be written as the ratio of a polynomial of degree at most $m$ and a polynomial of degree at most $n$.} $(m,n)$ that interpolate a given function in $m+n+1$ points, counting multiplicity. These are the so-called {\em multipoint Pad\'e approximants} \cite{BakerGravesMorris}, which subsume the {\em classical Pad\'e approximants} that interpolate the function in a single point with multiplicity $m+n+1$ \cite{Pade92}. Beyond the natural appeal of such an elementary {\em linear} procedure, reasons why Pad\'e approximants have received continuing attention include their early number theoretic success when applied to certain entire functions \cite{Herm73, Siegel, Skor03}, the remarkable behavior of diagonal ({\it i.e.} of $(n,n)$-type) Pad\'e approximants  to Markov functions \cite{Mar95, GL78}, the de Montessus de Ballore theorem and its generalizations on the convergence of $(m,n)$-type Pad\'e approximants for fixed $n$ \cite{S72} as well as the solution to the ``disk problem'' for diagonal sequences \cite{Gon82} that give rise to {\em extrapolation} schemes for analytic functions, the Nuttall-Pommerenke theorem on convergence in capacity of diagonal Pad\'e approximants to functions having singular set of zero capacity \cite{Nut70, Pom73}, the numerical use of Pad\'e approximants for initial value problems and convergence acceleration \cite{But64, Gr65, IserlesNorsett, BrezinskiRedivoZaglia}, their connections to quantum mechanics and quantum field perturbation theory \cite{Baker, Tj_PRA77}, and the research impetus generated by the so-called Pad\'e conjecture on the convergence of a subsequence of the diagonal sequence \cite{Bak73}, in the largest disk of meromorphy, that was eventually settled in the negative \cite{Lub03}. The reader will find a detailed introduction to most of these topics, as well as further references, in the comprehensive monograph \cite{BakerGravesMorris}.

In the present paper, which deals with the convergence of multipoint Pad\'e approximants to functions defined as Cauchy integrals over a compact arc, the relevant approximants are the diagonal ones with one interpolation condition at infinity for they are those producing the (generically simple) zero assumed by the function at infinity. The first class of Cauchy integrals for which diagonal Pad\'e approximants were proved convergent is the class of Markov functions, that is, Cauchy transforms of positive measures on a real segment \cite{Mar95,GL78}. In fact, under mild conditions, the diagonal multipoint Pad\'e approximants to such functions at a conjugate-symmetric system of interpolation points converge strongly ({\it i.e.}  locally uniformly in the domain of analyticity), as the number of interpolation conditions goes large. The working pin of this result is the intimate connection between Pad\'e approximants and orthogonal polynomials: the denominator of the $n$-th multipoint  Pad\'e approximant is the $n$-th orthogonal polynomial with respect to the measure defining the Markov function, weighted by the inverse of the polynomial whose zeros are the interpolation points (this polynomial is identically 1 for classical Pad\'e approximants). This is a key to sharp convergence rates, using the asymptotic theory of orthogonal polynomials with varying weights \cite{GL78, StahlTotik}.

This result has been generalized in several directions, in various attempts to develop a convergence theory of rational interpolants to more general Cauchy integrals. In particular, the strong convergence of classical diagonal Pad\'e approximants to Cauchy transforms of {\em complex-valued} functions on a segment was established in \cite{Bax61, Nut70, NS80}, when the density of the function with respect to the arcsine distribution on the segment is smoothly invertible, and in \cite{Mag87} under the mere assumption that the function has continuous argument and vanishes almost nowhere. It is interesting that the first three references proceed {\it via} strong asymptotics for {\em non-Hermitian} orthogonal polynomials on a segment, whereas the last one relies on different, operator theoretic methods. Multipoint Pad\'e approximants to Cauchy transforms of functions with non-vanishing {\em analytic} density with respect to the arcsine distribution on a real segment were in turn proved strongly convergent under quantitative assumptions on the near conjugate-symmetry of the interpolation scheme \cite{AVA04, Ap02}; the proof rests on strong asymptotics for non-Hermitian orthogonal polynomials with varying weight that dwell on the Riemann-Hilbert approach to classical orthogonal polynomials \cite{Deift} as adapted to the segment in \cite{KMLVAV04}. Let us mention that the occurrence of zeros in the density can harm the convergence, because some poles of the approximant may no longer cluster to the segment in this case, but if the density has an argument of bounded variation the number of these ``spurious'' poles remains bounded and one can still get convergence {\em in capacity} even for substantial zeroing of the density \cite{uBY2}.

The case of Cauchy integrals over more general arcs than a segment turns out to be harder. In the series of pathbreaking papers \cite{St85, St86, St89, St97}, classical diagonal Pad\'e approximants to functions with branchpoints were shown to converge {\em in capacity} on the complement of the system of arcs of minimal logarithmic capacity outside of which the function is analytic and single-valued. This extremal system of arcs, often called  nowadays a {\it symmetric contour} or an {\em $S$-contour}, is characterized by a symmetry property of the (two-sided) normal derivatives of its equilibrium potential, and the above-mentioned convergence ultimately depends on a deep potential-theoretic analysis of the zeros of non-Hermitian orthogonal polynomials over $S$-contours. Shortly after, the result was extended to multipoint Pad\'e approximants for Cauchy integrals of continuous quasi-everywhere non-vanishing functions over $S$-contours minimizing some {\em weighted capacity}, provided that the interpolation points asymptotically distribute like a measure whose potential is the logarithm of the weight \cite{GRakh87}. With these works it became transparent that the appropriate Cauchy integrals for Pad\'e approximation must be taken over $S$-contours, and that the interpolation points should distribute according to the weight that defines the symmetry property. Subsequently, {\em strong} convergence of diagonal multipoint Pad\'e approximants was obtained in this setting for analytic $S$-contours and non-vanishing {\em analytic} densities with respect to either the equilibrium distribution of the contour or the arclenth, under certain quantitative assumptions on the convergence of the interpolation points \cite{Ap02}. Earlier results in a specific case can be found in \cite{Suet00}.

Surprisingly perhaps, the natural inverse problem whether {\em given} a system of arcs, say $\mathcal{S}$, {\em there exists} a logarithmic potential whose exponential defines a suitable weight making $\mathcal{S}$ into an $S$-contour, and whether the interpolation points can be chosen accordingly to produce strong convergence of multipoint Pad\'e approximants to Cauchy integrals on $\mathcal{S}$, was apparently not considered. The goal of the present paper is to fill this gap when $\mathcal{S}$ is smooth. More precisely, we prove that a smooth arc is an $S$-contour for a weight whose logarithm is the potential of a positive measure supported disjointly from the arc if, and only if the arc is analytic ({\it cf.} Theorem \ref{thm:sp}). When this is the case, we further show there exists a scheme of interpolation points producing strong convergence of the sub-diagonal multipoint Pad\'e approximants to Cauchy integrals on the arc, provided the density of the integral with respect to {\em some} positive power of the equilibrium measure of the arc is Dini-smooth and non-vanishing ({\it cf.} Theorems \ref{thm:sa1} and \ref{thm:pade}). This result seems first to ascertain convergence of an interpolation scheme for Cauchy integrals of fairly general densities (note analyticity is not required) over a rather large class of arcs (namely analytic ones). Finally, we show that the non-vanishing requirement on the density can be relaxed a little ({\it cf.} Theorems \ref{thm:sa2} and \ref{thm:pade}), thereby providing an initial example, over an arbitrary analytic arc, of a family of functions whose zeroing at isolated places does not destroy the strong convergence of multipoint Pad\'e approximants to their Cauchy integral.

Our proofs differ somewhat from those commonly encountered in the field. In particular, we translate the symmetry property, which is of geometric character, into an analytic one, namely the existence of a sequence of pseudo-rational functions tending to zero off the arc and remaining bounded on the latter. Choosing the interpolation points to be the zeros of these pseudo-rational functions, we transform the non-Hermitian orthogonality equation satisfied by the denominator of the multipoint Pad\'e approximant into an integral equation involving Hankel and Toeplitz operator on the arc $\mathcal{S}$. Then, the bounded behavior of our pseudo-rational functions teams up with classical compactness properties of Hankel operators to produce strong asymptotics for the non-Hermitian orthogonal polynomials under study. This in turn provides us with locally uniform error rates for the approximant on the complement of $\mathcal{S}$. From the technical point of view the paper can be seen as deriving strong asymptotics for non-Hermitian orthogonal polynomials with complex densities with respect to the equilibrium distribution of the arc under minimum smoothness assumptions as compared to those currently used in the Riemann-Hilbert methods \cite{uMcLM,Ap02}.

The paper is organized as follows. In the next section we formulate our main results (Theorems \ref{thm:sp}--\ref{thm:pade}). Section \ref{sec:aux} contains the necessary background material for the proofs adduced in Section \ref{sec:proofs}. The last section contains an example of a class of contours for which the behavior of orthogonal polynomials is ``highly similar'' to the case when $F=[-1,1]$.

\section{Main Results}

Let $F$ be a rectifiable Jordan arc with endpoints $\pm1$ oriented from $-1$ to $1$. Set
\begin{equation}
\label{eq:chebyshev}
w(z) = w(F;z):= \sqrt{z^2-1}, \quad w(z)/z \to 1 \quad \mbox{as} \quad z\to\infty,
\end{equation}
which is a holomorphic function outside of $F$. Then $w$ has continuous traces (boundary values) from each side of $F$, denoted by $w^+$ and $w^-$ (e.g. $w^+$ is the trace of $w$ taken from the left side of $F$ as $F$ is traversed in the positive direction). In this paper we consider polynomials $q_n$, $\deg(q_n)\leq n$, satisfying non-Hermitian orthogonality relations with varying weights of the form
\begin{equation}
\label{eq:orthogonality}
\int_Ft^jq_n(t)w_n(t)\frac{dt}{w^+(t)} = 0, \;\;\; j=0,\ldots,n-1,
\end{equation}
together with  their associated functions of the second kind, i.e.
\begin{equation}
\label{eq:secondkind}
R_n(z) := \frac{1}{\pi i}\int_F\frac{q_n(t)w_n(t)}{t-z}\frac{dt}{w^+(t)}, \;\;\; z\in D:=\overline\C\setminus F,
\end{equation}
where $\overline\C$ is the extended complex plane and $\{w_n\}$ is a sequence of complex-valued functions on $F$. Our main goal is to show that the polynomials $q_n$ possess so-called strong (Szeg\H{o}-type) asymptotics under the assumption that $w_n=h_n/v_n$, where $\{h_n\}$ is a compact family of sufficiently smooth functions on $F$ and $v_n$ are polynomials of respective degrees at most $2n$, subject to certain restrictions. The desired result can be expressed as
\begin{equation}
\label{eq:strongasymptotics}
(2/\map)^n(q_n\szf_{w_n}) = 1+o(1), \quad \map:=z+w,
\end{equation}
locally uniformly in $D$, where $\szf_{w_n}$ is the Szeg\H{o} function of $w_n$ (see Section \ref{subsec:strong1}). As a consequence of (\ref{eq:strongasymptotics}), we establish the uniform convergence of multipoint diagonal Pad\'e approximants to Cauchy transforms of complex-valued measures of the form $hdt/w^+$, where $h$ is a sufficiently smooth function on $F$ and the Pad\'e approximants interpolate the corresponding Cauchy transform at the zeros of the polynomials $v_n$.

\subsection{Symmetry Property}
\label{subsec:symmetric}

Let $F$ be as above and $D$ be its complement in $\overline\C$. In what follows, we always assume that the endpoints of $F$ are $\pm1$. Define
\begin{equation}
\label{eq:map}
\map(z) := z+w(z), \quad z\in D,
\end{equation}
where $w$ is given by (\ref{eq:chebyshev}). Then $\map$ is a non-vanishing univalent holomorphic function in $D$ except for a simple pole at infinity. It can be easily checked that
\begin{equation}
\label{eq:boundary}
w^+ = - w^- \;\; \mbox{and therefore} \;\; \map^+\map^-=1 \;\; \mbox{on} \;\; F.
\end{equation}

Let $e\in D$. Define
\[
r(e;z) := \frac{\map(z)-\map(e)}{1-\map(e)\map(z)}, \quad |e|<\infty, \quad \mbox{and} \quad r(\infty;z) := \frac{1}{\map(z)}, \quad z\in D.
\]
Clearly, $r(e;\cdot)$ is a holomorphic function in $D$ with a simple zero at $e$ and non-vanishing otherwise. Moreover, it follows from (\ref{eq:boundary}) that unrestricted boundary values $r^\pm(e;\cdot)$ exist everywhere on $F$ and satisfy
\begin{equation}
\label{eq:boundaryR}
r^+(e;t)r^-(e;t) = 1, \quad t\in F.
\end{equation}

Let now $\E:=\{E_n\}_{n\in\N}$ be a {\it triangular scheme of points} in $D$, i.e. each $E_n$ consists of $2n$ not necessarily distinct nor finite points contained in $D$. We define {\it the support of} $\E$ as $\supp(\E):=\cap_{n\in\N}\overline{\cup_{k\geq n}E_k}\subset\overline\C$. Clearly, the support of any weak$^*$ limit point of the counting measures of points in $E_n$ is a subset of $\supp(\E)$. Hereafter, the counting measure of a finite set is a probability measure that has equal mass at each point counting multiplicities and the weak$^*$ topology is understood with respect to the duality between complex measures and continuous functions with compact support in $\overline\C$. To each set $E_n$ we associate a function $r_n$ by the rule
\begin{equation}
\label{eq:rn}
r_n(z) := \prod_{e\in E_n} r(e;z), \quad z\in D.
\end{equation}
Then $r_n$ is a holomorphic function in $D$ with $2n$ zeros there and whose boundary values on $F$ satisfy $r_n^+r_n^-=1$. Among all possible schemes $\E$, we are interested only in those that induce the following symmetry property on $F$.

\begin{df}[Symmetry w.r.t. $\E$]
Let $F$ be a rectifiable Jordan arc and $\E$ be a triangular scheme of points in $D$. We say that $F$ is symmetric with respect to $\E$ if the associated to $\E$ functions $r_n$ satisfy $|r_n^\pm| = O(1)$ uniformly on $F$ and $r_n=o(1)$ locally uniformly in~$D$.
\end{df}

Note that the boundedness of $|r_n^\pm|$ from above implies their the boundedness away from zero by (\ref{eq:boundaryR}).

As we shall see, this definition is, in fact, closely related to the classical definition of the symmetry of an arc in a field (more generally, of a system of arcs). The latter is based on a number of potential-theoretic notions all of which can be found in \cite{SaffTotik}.

\begin{df}[Symmetry in a Field]
Let $F$ be a rectifiable Jordan arc and let $q$ be a harmonic function in some neighborhood of $F$. It is said that $F$ is symmetric in the field $q$ if the following partial derivatives exist\footnote{Normal derivatives are understood in the strong sense, namely, if the tangent to $F$ exists at $t$ and $\vec{n}_t^\pm$ are the unit normals from each side of $F$ then the limits of the Euclidean scalar products $\langle\nabla(U^\lambda+q)(y),\vec{n}_t^\pm\rangle$ exist as $y$ approaches $t$ along $\vec{n}_t^\pm$, respectively.} and are equal:
\begin{equation}
\label{eq:SProperty}
\frac{\partial(U^\lambda + q)}{\partial n^+} = \frac{\partial(U^\lambda + q)}{\partial n^-} \quad \mbox{a.e. on} \quad \supp(\lambda),
\end{equation}
where $\lambda$ is the weighted equilibrium distribution in the field $q$, $U^\lambda$ is the logarithmic potential of $\lambda$, and $n^\pm$ are the one-sided unit normals to $F$.
\end{df}

The symmetry property turned out to be vital in the investigation of the rates of best approximation of functions with branch points by rational interpolants (multipoint Pad\'e approximants) \cite{St86, St97}. Given a continuum $E$ with connected complement and a function $f$ holomorphic in a neighborhood of $E$, analytically continuable except over a compact set $F_0$, $\cp(F_0)=0$, where $\cp(\cdot)$ is the \emph{logarithmic capacity}, then there exists an \emph{extremal (maximal) domain} $D$ such that the condenser $(F,E)$, $F:=\partial D\supset F_0$, has minimal \emph{condenser capacity} among all domains in which $f$ has single-valued holomorphic continuation. Moreover, if the continuation of $f$ over $\overline\C\setminus F_0$ is not single-valued then the boundary $F$ of the extremal domain is a union of a system of open analytic arcs and a set of capacity zero, and it can be characterized as the unique contour containing $F_0$ and satisfying (\ref{eq:SProperty}) with $q=-U^{\nu_E}$, where $\nu_E-\nu_F$ is the equilibrium distribution for the minimal energy problem for signed measures of the form $\sigma_E-\sigma_F$, where $\sigma_E$ and $\sigma_F$ range over all probability Borel measures on $E$ and $F$, respectively. The best rate (in the $n$-th root sense) of approximation to $f$ on $E$ is achieved by rational interpolants corresponding to a triangular scheme whose points are asymptotically distributed as $\nu_E$. The counting measures of the poles of such interpolants weakly converge to $\nu_F$ and the interpolants themselves converge to $f$ in capacity in $D$.

Dwelling on the work of H. Stahl discussed above, A. A. Gonchar and E. A. Rakhmanov \cite{GRakh87} extended the definition of a symmetric contour to general harmonic fields in order to give a complete formal proof of the ``Magnus conjecture'' on the rate of approximation of the exponential function on the positive semi-axis\footnote{This conjecture was formerly known as ``1/9 conjecture'', which was shown to be false by H.-U. Opitz and K. Scherer in \cite{OpSch85}. The correct constant was identified by A. P. Magnus in \cite{Mag86}, however, his proof was not entirely formal.}. In \cite{GRakh87} it is assumed that $F$ and $q$ are such that $\supp(\lambda)$ is a \emph{tame} set, i.e. the intersection of a sufficiently small neighborhood of quasi every point (q.e. means up to a set of zero capacity) of $\supp(\lambda)$ with the support itself is an analytic arc. It is easy to check that in this case partial derivatives in (\ref{eq:SProperty}) do exist. It was also observed that in the setting of rational interpolation to functions as above, one can take interpolation points asymptotically distributed like any Borel measure $\nu$ on $E$. This will define a unique contour $F\supset F_0$ that satisfies (\ref{eq:SProperty}) with $q=-U^\nu$ and the interpolants will converge to the approximated function in capacity in $\overline\C\setminus F$ while the counting measures of their poles will weakly converge to $\hat\nu$, where $\hat\nu$ is the \emph{balayage} of $\nu$ onto $F$.

Since our main interest lies with rational interpolation, we shall be concerned only with arcs satisfying (\ref{eq:SProperty}), where $q=-U^\nu$ for some Borel measure $\nu$ with compact support. In this case much milder assumptions on $F$ are sufficient in order to have well-defined partial derivatives (see the statement of the next theorem). Necessarily, for such fields, the symmetric arcs turn out to be analytic as apparent from the following theorem.

\begin{thm}
\label{thm:sp}
Let $F$ be a rectifiable Jordan arc such that for $x=\pm1$ and all $t\in F$ sufficiently close to $x$ it holds that $|F_{t,x}|\leq\const|x-t|^\beta$, $\beta>1/2$, where $|F_{t,x}|$ is the arclenth of the subsarc of $F$ joining $t$ and $x$ and $\const$ is an absolute constant. Then the following are equivalent:
\begin{itemize}
\item[(a)] there exists a triangular scheme of points $\E$, $\supp(\E)\subset D$, such that $F$ is symmetric with respect to $\E$;
\item[(b)] there exists a positive compactly supported Borel measure $\nu$, $\supp(\nu)\subset D$, such that $F$ is symmetric in the field $-U^\nu$;
\item[(c)] $F$ is an analytic Jordan arc, i.e. there exists a univalent function $p$ holomorphic in some neighborhood of $[-1,1]$ such that $F=p([-1,1])$;
\end{itemize}
\end{thm}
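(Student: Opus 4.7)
My plan is to establish the three-way equivalence by proving the cycle $(c)\Rightarrow(b)\Rightarrow(a)\Rightarrow(c)$, with the last implication being the principal technical challenge.

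For $(c)\Rightarrow(b)$ I would leverage the analytic parametrization. Writing $F=p([-1,1])$ with $p$ univalent and holomorphic in a neighborhood $V$ of $[-1,1]$, the formula $\sigma(z):=p(\overline{p^{-1}(z)})$ defines an anti-holomorphic Schwarz reflection on a tube about $F$ in $D$. I would take $\nu$ to be the pushforward under $p$ of a positive, conjugate-symmetric, compactly supported Borel measure $\tilde\nu$ on $V\setminus[-1,1]$, for instance normalised arclength on $\{|\zeta|=\rho\}\cap V$ for some $\rho>1$. The weighted equilibrium problem for $F$ in the field $-U^\nu$ pulls back via $p$ to a symmetric problem on $[-1,1]$, for which the $S$-property is classical; transferring back, the one-sided normal derivatives of $U^\lambda-U^\nu$ match at interior points of $F$ by the $\sigma$-invariance of the configuration.

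For $(b)\Rightarrow(a)$ I would discretize $\nu$: partition $\supp\nu$ into $2n$ Borel sets of equal $\nu$-mass and pick a representative from each, so that the counting measures $\chi_n:=(2n)^{-1}\sum_{e\in E_n}\delta_e$ satisfy $\chi_n\cws\nu/|\nu|$. The analysis rests on the identity
\[
\log|r_n^+(t)|=U^{\nu_n}(\map^-(t))-U^{\nu_n}(\map^+(t))-2n\log|\map^+(t)|,\qquad \nu_n:=\sum_{e\in E_n}\delta_{\map(e)},
\]
which follows from $\map^+\map^-=1$ on $F$. Dividing by $2n$ and passing to the limit, the $S$-condition on $\nu$ is equivalent to uniform convergence of the normalised potential difference to $\log|\map^+|$ on $F$, which is the asymptotic form of $|r_n^\pm|=O(1)$; the decay $r_n=o(1)$ locally uniformly in $D$ follows from the Frostman inequality for the weighted equilibrium $\lambda$ together with the weak convergence of $\chi_n$.

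The substantive direction is $(a)\Rightarrow(c)$. From any weak-$*$ limit $\nu$ of $\chi_n$ (after discarding mass escaping to $\infty$), the bounds from $(a)$ pass to the limit in the above identity and force $F$ to be symmetric in the field $-U^\nu$, essentially by the converse of the previous step. Set $h:=U^\lambda-U^\nu-\ell$: it is harmonic in $D\setminus\supp\nu$ and vanishes on $\supp\lambda\subset F$ by Frostman. The $S$-condition, combined with vanishing of the tangential derivative of $h$ on $F$, yields $(\partial_z h)^+=-(\partial_z h)^-$ on $\supp\lambda$, because the normal derivative flips sign between the two outward unit normals. Multiplying by $w$, which satisfies $w^+=-w^-$ across $F$, the function $G:=w\,\partial_z h$ has matching traces on $\supp\lambda$ and extends holomorphically across the relative interior of $\supp\lambda$ to a holomorphic function on $\C\setminus(\supp\nu\cup\{\pm1\})$. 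Fix $t_0$ in the relative interior of $\supp\lambda$, $t_0\ne\pm1$; pick a disk $B$ disjoint from $\{\pm1\}\cup\supp\nu$ and a local branch $\tilde w$ of $\sqrt{z^2-1}$ holomorphic in $B$. Then $H_0(z):=2\re\int_{t_0}^z G(\zeta)/\tilde w(\zeta)\,d\zeta$ is harmonic in $B$, vanishes on $F\cap B$, and has non-vanishing gradient at $t_0$ off an isolated zero set of $G$; the implicit function theorem for real-analytic functions identifies $F\cap B$ as a real-analytic arc, and analyticity propagates along $F$. The regularity hypothesis $|F_{t,x}|\le\const|x-t|^\beta$ with $\beta>1/2$ is invoked at the endpoints $\pm1$, where the $\sqrt{z\mp1}$ zero of $w$ cancels the corresponding $(z\mp1)^{-1/2}$ singularity of $\partial_z h$, so that $G$ remains bounded there and the analytic extension reaches the endpoints. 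The main obstacle is precisely this $(a)\Rightarrow(c)$ step: sharply controlling the limits in the normalised logarithmic identity to extract the exact $S$-condition (rather than a mere inequality), and then reconciling the real-analytic implicit-function argument with the endpoint behaviour permitted by the arclength hypothesis on $F$.
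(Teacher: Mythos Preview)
Your cycle runs $(c)\Rightarrow(b)\Rightarrow(a)\Rightarrow(c)$, whereas the paper proves $(a)\Rightarrow(b)\Rightarrow(c)\Rightarrow(a)$. This is not merely cosmetic: the paper deliberately inserts the analyticity step \emph{between} the field-symmetry and the scheme-symmetry, because the construction of a triangular scheme with $|r_n^\pm|=O(1)$ requires quantitative discretization control that is only available once one knows $F$ (equivalently $\Gamma$) is analytic.

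The genuine gap in your plan is the implication $(b)\Rightarrow(a)$. Your identity for $\log|r_n^+(t)|$ is correct, but after dividing by $2n$ and passing to the limit you obtain at best that $(2n)^{-1}\log|r_n^\pm|\to 0$ uniformly on $F$, i.e.\ $\log|r_n^\pm|=o(n)$. The definition of symmetry with respect to $\E$ demands $\log|r_n^\pm|=O(1)$, an enormously stronger statement: a generic equal-mass discretization of $\nu$ on an arbitrary compact support simply does not produce Riemann-sum errors of order $O(1/n)$ in the kernel $\log|r(\cdot;z)|$. The paper obtains this sharp rate only after first proving $(b)\Rightarrow(c)$, then in $(c)\Rightarrow(a)$ placing the discretization points on a \emph{smooth analytic} level curve $\Gamma_\rho$ of an explicit conformal map $\Psi$, where the kernel $k_z(\tau)=\log|(z-\tau)/(1-z\tau)|$ is uniformly Lipschitz in $\tau$ and equal-arclength partitions yield the needed $\|u_n\|_\Gamma\le\const/n$. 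Without analyticity in hand, you cannot close this step.

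Two further remarks. In $(c)\Rightarrow(b)$, logarithmic potentials do not pull back cleanly under a non-M\"obius conformal map $p$: $U^{\tilde\nu}\circ p^{-1}$ differs from $U^{p_*\tilde\nu}$ by a harmonic term involving $\log|p'|$, so the assertion that the weighted equilibrium problem ``pulls back to a symmetric problem on $[-1,1]$'' needs a careful accounting of this correction before one can invoke the classical $S$-property. (A Schwarz-reflection argument can be made to work, but the invariance to exploit is that of the Green potential $U_D^\nu$, not of $U^\nu$ itself.) In $(a)\Rightarrow(c)$, your local analyticity argument via $G=w\,\partial_z h$ and the real-analytic implicit function theorem is close in spirit to the paper's, but the paper produces a \emph{global} univalent parametrization by constructing an explicit conformal map $\Psi$ of an annular neighbourhood of $\Gamma$ onto a circular annulus and setting $p=f^{-1}$ with $f=\jt\circ\Psi\circ\map$; this automatically handles the endpoints and the global univalence that your local patches do not.
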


The above theorem covers only the case where $\supp(\E)$ is disjoint from $F$. The authors do not know whether non-analytic arcs can be symmetric with respect to a triangular scheme when the support of the latter does intersect the arc.

We point out that the proof of Theorem \ref{thm:sp} is constructive. In other words, for a given analytic arc $F$, a suitable scheme $\E$ can (in a non-unique manner) be explicitly written in terms of the function $p$ that analytically parametrizes $F$. Each such scheme $\E$ gives rise to a suitable measure $\nu$ simply by taking the weak$^*$ limit of the counting measures of points of the sets in $\E$.

\subsection{Strong Asymptotics for Non-Vanishing Densities}
\label{subsec:strong1}

Let $K$ be a compact set in $\C$ and denote by $C(K)$ the space of continuous functions on $K$ endowed with the usual supremum norm $\|\cdot\|_K$. For $h\in C(K)$, set $\omega_h$ to be the modulus of continuity of $h$, i.e.
\[
\omega_h(\tau) := \max_{|t_1-t_2|\leq \tau}|h(t_1)-h(t_2)|, \;\;\; \tau\in[0,\diam(K)],
\]
where $\diam(K) := \max_{t_1,t_2\in K}|t_1-t_2|$. It is said that $h$ is {\it Dini-continuous} on $K$ if
\[
\int_{[0,\diam(K)]}\frac{\omega_h(\tau)}{\tau}d\tau<\infty.
\]
We denote by $\dc(K)\subset C(K)$ the set of Dini-continuous functions on $K$ and by $\dc_\omega(K)$ the subset of $\dc(K)$ such that $\omega_h\leq\omega$ for every $h\in\dc(K)$, where $\omega$ is the modulus of continuity of \emph{some} function in $\dc(K)$. In what follows, we shall employ the symbol ``$^*$'' to indicate the nowhere vanishing subset of a functional set (for instance, $C^*(K)$ stands for the non-vanishing continuous functions on $K$).

Let $h\in\dc^*(F)$ and $\log h$ be an arbitrary but fixed continuous branch of the logarithm of $h$. Then it is easily verified (see Section \ref{subsec:aux_szf}) that the {\it geometric mean} of $h$, i.e.
\begin{equation}
\label{eq:geommean}
\gm_h := \exp\left\{\int_F\log h(t)\frac{idt}{\pi w^+(t)}\right\},
\end{equation}
is independent of the actual choice of the branch of the logarithm and is non-zero. Moreover, the {\it Szeg\H{o} function} of $h$, i.e.
\begin{equation}
\label{eq:szego}
\szf_h(z) := \exp\left\{\frac{w(z)}{2}\int_F\frac{\log h(t)}{z-t}\frac{idt}{\pi w^+(t)}-\frac12\int_F\log h(t)\frac{idt}{\pi w^+(t)}\right\}, \quad z\in D,
\end{equation}
is also independent of the choice of the branch (as long as the same branch is used in both integrals). In fact, it is the unique non-vanishing holomorphic function in $D$ that has continuous unrestricted boundary values on $F$ from each side and satisfies
\begin{equation}
\label{eq:szegodecomp}
h = \gm_h\szf_h^+\szf_h^- \;\; \mbox{on} \;\; F \;\; \mbox{and} \;\; S_h(\infty)=1.
\end{equation}

To state our next theorem we need one more notion. Let $X$ be a Banach space and $Y$ be a subset of $X$. We say that a family $\{h_n\}\subset Y$ is {\it relatively compact in} $Y$ if any sequence from this family contains a norm-convergent subsequence and all the limit points of $\{h_n\}$ belong to $Y$.

With the use of the previous notation, we formulate our first result on strong asymptotics of non-Hermitian orthogonal polynomials.

\begin{thm}
\label{thm:sa1}
Let $F$ be an analytic Jordan arc connecting $\pm1$ that is symmetric with respect to a triangular scheme of points $\E=\{E_n\}_{n\in\N}$. Further, let $\{q_n\}_{n\in\N}$ be a sequence of polynomials satisfying
\[
\int_Ft^jq_n(t)w_n(t)\frac{dt}{w^+(t)} = 0, \;\;\; j=0,\ldots,n-1,
\]
with $w_n=h_n/v_n$, where $\{h_n\}$ is a relatively compact family in $\dc^*_\omega(F)$ and $v_n$ are monic polynomials with zeros at the finite points of $E_n$. Then, for all $n$ large enough, polynomials $q_n$ have exact degree $n$ and therefore can be normalized to be monic. Under such a normalization, we have that
\begin{equation}
\label{eq:sa1}
\left\{
\begin{array}{lll}
q_n  &=& [1+o(1)]/\szf_n \\
R_nw &=& [1+o(1)]\gamma_n\szf_n
\end{array}
\right. \;\;\; \mbox{locally uniformly in} \;\; D,
\end{equation}
where $\szf_n := (2/\map)^n\szf_{w_n}$, $\gamma_n:=2^{1-2n}\gm_{w_n}$, and $R_n$ was defined in \eqref{eq:secondkind};
\begin{equation}
\label{eq:sa2}
\left\{
\begin{array}{lll}
q_n &=&  (1+d_n^-)/\szf_n^+ + (1+d_n^+)/\szf_n^- \\
(R_nw)^\pm &=& (1+d_n^\pm)~\gamma_n\szf_n^\pm
\end{array}
\right. \;\;\; \mbox{on} \;\; F,
\end{equation}
where $d_n^\pm\in\cs$ and satisfy\footnote{From classical estimates on singular integrals \cite{Gakhov} the functions $(R_nw)^\pm$ are, in fact, continuous functions on $F$; this also can be seen, for instance, from equation \eqref{eq:translation} below. However, \eqref{eq:sa2} does not contain explicit information on the pointwise behavior of $(R_nw)^\pm$ as the smallness of $d_n$ is claimed in $L^p$ norm only and pays no particular attention to the values of $d_n$ on a set of zero linear measure of $F$.}
\[
\int_F\frac{|d_n^-(t)|^p + |d_n^+(t)|^p}{\sqrt{|1-t^2|}}|dt| \to 0 \;\; \mbox{as} \;\; n\to\infty
\]
for any $p\in[1,\infty)$. Furthermore, the following limit takes place
\begin{equation}
\label{eq:sa3}
\frac{q_n^2(t)w_n(t)}{\gamma_nw^+(t)}dt ~\cws~ \frac{dt}{w^+(t)},
\end{equation}
where $dt$ is the differential along $F$ and ``$\cws$'' stands for the weak$^*$ convergence of measures.
\end{thm}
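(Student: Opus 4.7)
My starting point would be to convert the orthogonality into a pair of boundary identities on $F$. Assuming for the moment that $q_n$ has exact degree $n$ and is monic, the Plemelj--Sokhotski formulas applied to $R_n$ together with $w^+=-w^-$ give
$$
(R_nw)^+ + (R_nw)^- = 2\, q_n w_n \quad \text{on } F,
$$
while the multiplicative Szeg\H{o} decomposition $w_n = \gm_{w_n}\szf_{w_n}^+\szf_{w_n}^-$ combined with $\map^+\map^- = 1$ yields the algebraic identity $\gamma_n \szf_n^+ \szf_n^- = 2 w_n$ on $F$. Setting $\Phi_n := q_n \szf_n$ and $\Xi_n := R_n w/(\gamma_n \szf_n)$, both are holomorphic in $D$ with $\Phi_n(\infty) = 1$ from the monic normalization, so \eqref{eq:sa1} amounts to showing $\Phi_n \to 1$ and $\Xi_n \to 1$ locally uniformly in $D$. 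The two identities above then collapse to a single scalar transmission condition on $F$ linking the boundary values of $\Phi_n$ and $\Xi_n$.

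Next, I would exploit the symmetry functions $r_n$ to recast this relation as a singular integral equation amenable to Hankel--Toeplitz analysis. Because the finite zeros of $r_n$ are precisely the zeros of $v_n$, an appropriate multiplication by $r_n$ or $1/r_n$ absorbs the polynomial factor $v_n$ from $w_n = h_n/v_n$ while keeping all remaining data bounded and Dini-smooth on $F$. After this renormalization, the boundary condition can be written as
$$
(I + T_n + \oh_n)\, X_n = Y_n \quad \text{in } L^p(F,|dt|/\sqrt{|1-t^2|}),
$$
where $X_n$ encodes the deviation of $\Phi_n|_F$ from $1$, the Toeplitz-type part $T_n$ is built from the Szeg\H{o} factor of the Dini-smooth symbol $h_n$, and the Hankel-type part $\oh_n$ carries the pseudo-rational factors $r_n^\pm$. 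The relative compactness of $\{h_n\}$ in $\dc^*_\omega(F)$ makes $I + T_n$ uniformly invertible by classical Toeplitz--Wiener--Hopf theory on Dini-smooth curves (Section~\ref{sec:aux}), while the defining symmetry $r_n = o(1)$ on compact subsets of $D$ combined with Hartman-type criteria translates into $\|\oh_n\| \to 0$.

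Solving the equation by a Neumann series for $n$ large then gives $\|X_n\|_{L^p} = o(1)$ for every $p\in[1,\infty)$, which is precisely the decay of $d_n^\pm$ asserted in \eqref{eq:sa2}. Lifting this off $F$ through the Cauchy representation of $\Phi_n - 1$ (resp. $\Xi_n - 1$) produces the locally uniform limits \eqref{eq:sa1}. The standing assumption $\deg q_n = n$ is then justified a posteriori: if $\deg q_n < n$ along a subsequence, repeating the same analysis for a suitably rescaled polynomial would force $\Phi_n(\infty) = 0$, contradicting $\Phi_n \to 1$. Finally, to establish \eqref{eq:sa3}, I would substitute the two representations of $q_n$ on $F$ from \eqref{eq:sa2} into $q_n^2$; using $\gamma_n \szf_n^+ \szf_n^- = 2 w_n$, the ``diagonal'' cross term delivers the expected weak limit $dt/w^+(t)$ because $d_n^\pm \to 0$ in $L^1$, while the two ``square'' terms carry the highly oscillatory factors $(\map^\pm)^{2n}$ which weak$^*$-vanish by a Riemann--Lebesgue argument after pulling back to the unit circle via $\map$.

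The main obstacle I anticipate lies in the middle step: realizing the orthogonality as an operator equation with a cleanly separated Toeplitz--Hankel structure on $F$, and verifying that $\|\oh_n\| \to 0$ uniformly in $n$. This is where the three hypotheses must conspire---analyticity of $F$ (to run symbolic calculus via a conformal parametrization to the unit circle), Dini-smoothness of $h_n$ (for continuity of the operator symbols and hence Hartman compactness of the Hankel pieces), and the geometric symmetry $r_n = o(1)$ in $D$ (for the operator-norm decay of the Hankel contribution)---and none of them appears easily removable.
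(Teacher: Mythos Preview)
Your overall architecture matches the paper's---Plemelj--Sokhotski identity, Szeg\H{o} factorization, lift to $\Gamma$ via the Joukovski map, Hankel/Toeplitz analysis---but the operator-theoretic core has a genuine gap. You place $r_n^\pm$ into the Hankel symbol and claim $\|\oh_n\|\to 0$ from ``$r_n=o(1)$ on compact subsets of $D$ combined with Hartman-type criteria.'' This fails: the Hankel norm is governed by the $L^\infty$ boundary data of the symbol on $\Gamma$, not by interior decay, and on $\Gamma$ the functions $\hat r_n$ are merely uniformly bounded (indeed bounded \emph{below}, since $r_n^+r_n^-\equiv 1$). Moreover $\hat r_n$ is already the trace of a bounded holomorphic function in $D^+$, so by itself it contributes nothing to the Hankel norm; with the continuous factor $\hat c_n:=(\szf_{h_n}^\mp/\szf_{h_n}^\pm)\circ\jt$ adjoined, the distance in $L^\infty(\Gamma)$ from $\hat r_n\hat c_n$ to traces of bounded analytic functions in $D^+$ remains bounded but does not tend to zero. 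Consequently your Neumann series need not converge and the equation cannot be solved as proposed.

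The paper's mechanism puts $\hat r_n$ on the \emph{input} side rather than in the symbol. After normalizing $\hat a_n := (R_nw\map^n/\szf_{w_n})\circ\jt$ by its $L^p(\Gamma)$ norm (this normalization is always legitimate and sidesteps your a priori monic assumption), projection by $\oq$ gives directly $d_{n,p} = \oh_{\hat c_n}(\hat a_{n,p}\hat r_n)$, with no Toeplitz piece to invert. The Hankel operator has the continuous symbol $\hat c_n$ and is therefore compact, uniformly over the relatively compact family $\{\hat c_n\}\subset C(\Gamma)$; the input $\hat a_{n,p}\hat r_n$ is bounded in $E_+^p$ and \emph{weakly} null because $\hat r_n\to 0$ locally uniformly in $D^+$ with bounded traces. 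Compact operators map weakly null sequences to norm-null ones, and that is what yields $\|d_{n,p}\|_p\to 0$. The same misallocation affects your argument for \eqref{eq:sa3}: the ``square'' terms do not carry a bare factor $(\map^\pm)^{2n}$ (on a general analytic arc $|\map^\pm|\not\equiv 1$, so this would blow up rather than oscillate); the Szeg\H{o} identity \eqref{eq:szegopoly} for $v_n$ converts them into factors $r_n^\pm c_n^\pm$, whose contribution vanishes again by weak nullity of $\hat r_n$ tested through a bounded Toeplitz operator, not by a Riemann--Lebesgue argument on the circle.
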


The theorem is stated for analytic arcs only because in the view of Theorem \nolinebreak \ref{thm:sp} this will be the case if the scheme $\E$ does not touch $F$, which is a standard setting in interpolation. In fact, a careful examination of the proof of Theorem \ref{thm:sa1} shows that $F$ could simply be a Carleson (Ahlfors-regular) curve. However, the authors do not have any convincing evidence suggesting that such arcs could be symmetric with respect to some triangular scheme unless they are analytic.

\subsection{Strong Asymptotics for Densities with Some Vanishing}
\label{subsec:strong2}

In this section we consider an extension of Theorem \ref{thm:sa1} to densities that may vanish. As mentioned in the introduction, such an extention is important since it provides the first example of strong asymptotics with vanishing density. The method used in the proof is essentially the same and is presented separately solely for the clarity of the exposition.

The purpose of the following construction is to introduce a factor having zeros in the density of the measure and whose vanishing is sufficiently weak so that the proof of Theorem \ref{thm:sa1} still goes through with minor modifications.

Denote by $F_{c,d}$ the closed subarc of $F$ with endpoints $c$ and $d$. As $F$ is compact there exists $y\in(-\infty,-1]$ such that $F\cap(-\infty,y)=\emptyset$. Chose $F_{aux}$ to be any smooth Jordan arc, $F_{aux}\cap F=\{-1\}$, that links $y$ and $-1$. Let $x\in F$, define $\ar_x$ to be the branch of $\arg(\cdot-x)$ that is continuous in $\C\setminus\{(-\infty,y]\cup F_{aux} \cup F_{-1,x}\}$ and tends to zero when the variable approaches infinity along the positive real axis. For definiteness, we set $\ar_x(x)$ to be the limit of $\ar_x$ along $F_{x,1}$. Then $\ar_x$ has continuous traces $\ar_x^\pm$ on $F_{-1,x}$ oriented from $-1$ to $x$ and $\ar_x^+(t) = \ar_x^-(t) + 2\pi$, $t\in F_{-1,x}\setminus\{x\}$. Clearly, the functions $\ar_x^\pm$ and $\ar_{x|F_{x,1}}$ do not depend on the choice of $F_{aux}$.

For any $\alpha\in(0,1/2]$ and $x\in F$ set
\begin{equation}
\label{eq:hbar}
\hbar(\alpha,x;t) := |2(t-x)|^{2\alpha}\left\{
\begin{array}{ll}
\exp\left\{2i\alpha\ar_x(t)\right\}, & t\in F_{x,1}, \\
\exp\left\{2i\alpha(\ar^+_x(t)-\pi)\right\}, & t\in F_{-1,x}\setminus\{x\}.
\end{array}
\right.
\end{equation}
Then $\hbar(1,x;t) = 4(t-x)^2$, $t\in F$, the argument of $\hbar(\alpha,x;\cdot)$ is continuous on $F$, and $\hbar(\alpha,x;t) = |2(t-x)|^{2\alpha}$ when $F=[-1,1]$. 

Let $h$ be a function on $F$ for which $\szf_h$ is well-defined. Set $\scf_h^\pm := \szf_h^\pm/\szf_h^\mp$. It is shown in Section \ref{subsec:aux_szf} that functions $\scf_h^\pm$ are continuous on $F$ and satisfy $\scf_h^\pm(\pm1)=1$ whenever $h\in\dc^*(F)$. Moreover, it is also shown that the functions $\scf_{\hbar(\alpha,x;\cdot)}^\pm$ are continuous on $F\setminus\{x\}$ and have moduli $|\map^\mp(t)|^{2\alpha}$, $t\in F\setminus\{x\}$, where $\map$ was defined in (\ref{eq:map}). They possess the one-sided limits along $F$ at $x$, denoted by $\scf_{\hbar(\alpha,x;\cdot)}^\pm(x^\pm)$, and
\begin{equation}
\label{eq:scatjumps}
\left|\scf_{\hbar(\alpha,x;\cdot)}^\pm(x^+)-\scf_{\hbar(\alpha,x;\cdot)}^\pm(x^-)\right| = 2\sin(\alpha\pi)\left|\map^\mp(x)\right|^{2\alpha},
\end{equation}
where we make the convention that
\[
\scf_{\hbar(\alpha,x;\cdot)}^+(1^+):=\scf_{\hbar(\alpha,x;\cdot)}^-(1^-) \quad \mbox{and} \quad \scf_{\hbar(\alpha,x;\cdot)}^-(-1^-):=\scf_{\hbar(\alpha,x;\cdot)}^+(-1^+).
\]

Now, let $F_0\subset F$ be a finite set of distinct points. We associate to each $x\in F_0$ some $\alpha_x\in(0,1/2)$ and define
\begin{equation}
\label{eq:VF}
\hbar(t) = \hbar(F_0;t) := \prod_{x\in F_0\setminus\{\pm1\}}\hbar(\alpha_x,x;t)\prod_{x\in F_0\cap\{\pm1\}}\hbar(\alpha_x/2,x;t),
\end{equation}
$t\in F$. Then the following theorem takes place.

\begin{thm}
\label{thm:sa2}
Assume that
\begin{itemize}
\item $F$ is an analytic Jordan arc connecting $\pm1$ that is symmetric with respect to a triangular scheme of points $\E=\{E_n\}_{n\in\N}$ and let $v_n$ be the monic polynomial with zeros at the finite points of $E_n$;
\item the functions $r_n$, associated to $\E$ via \eqref{eq:rn}, are such that $\{|r_n\circ\map^{-1}|\}_{n\in\N}$ is a relatively compact family in $\dc(\Gamma)$, where $\Gamma$ is the boundary of $\map(D)$;
\item $\{h_n\}_{n\in\N}$ is a relatively compact family in $\dc^*_\omega(F)$;
\item $F_0\subset F$ is a finite set of distinct points, $\alpha_x\in(0,1/2)$, $x\in F_0$, and $\hbar$, given by \eqref{eq:VF}, is such that
\begin{equation}
\label{eq:upsilon}
\limsup_{n\to\infty} \left|(\scf_{\hbar h_n}r_n)^\pm(x^+)-(\scf_{\hbar h_n}r_n)^\pm(x^-)\right| < 2\|\oq\|^{-1},
\end{equation}
$x\in F_0$, where $1\leq\|\oq\|$ is the norm of the outer Cauchy projection operator on $L^2(\Gamma)$ defined in \eqref{eq:opoq}.
\end{itemize}
If $\{q_n\}_{n\in\N}$ is a sequence of polynomials satisfying \eqref{eq:orthogonality} with $w_n=\hbar h_n/v_n$, then the polynomials $q_n$ have exact degree $n$ for all $n$ large enough and therefore can be normalized to be monic. Under such a normalization \eqref{eq:sa1} and \eqref{eq:sa2} hold with $d_n$ satisfying
\[
\limsup_{n\to\infty} \int_F\frac{|d_n^-(t)|^2+|d_n^+(t)|^2}{\sqrt{|1-t^2|}}|dt| < \infty.
\]
\end{thm}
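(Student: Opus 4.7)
The strategy is to follow the blueprint of the proof of Theorem \ref{thm:sa1}, carefully tracking the extra jumps that $\hbar$ introduces in the Szeg\H{o} function. Write $w_n=\hbar h_n/v_n$ and normalize via $A_n:=q_n\szf_n$ and $B_n:=R_nw/(\gamma_n\szf_n)$, where $\szf_n=(2/\map)^n\szf_{w_n}$ and $\gamma_n=2^{1-2n}\gm_{w_n}$. Combining the Szeg\H{o} factorization (\ref{eq:szegodecomp}), the boundary identities (\ref{eq:boundary}), and the Plemelj-Sokhotski formulas, the orthogonality relation (\ref{eq:orthogonality}) translates into a multiplicative Riemann-Hilbert problem coupling the boundary traces of $A_n$ and $B_n$ through $\scf_{\hbar h_n}^\pm$ and the scheme function $r_n$ of (\ref{eq:rn}). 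Transplanting from $F$ to the analytic curve $\Gamma=\partial\map(D)$ via $\map$, and introducing $d_n^\pm$ through $(R_nw)^\pm=(1+d_n^\pm)\gamma_n\szf_n^\pm$, this reduces to a singular integral equation on $L^2(\Gamma)$ of the form
\[
(I-M_{\scf_{\hbar h_n}r_n}\oq)[d_n]=f_n,
\]
where $M_\phi$ denotes multiplication by $\phi$ and $\{f_n\}$ is uniformly bounded in $L^2(\Gamma)$, thanks to the uniform boundedness of $r_n^\pm$ on $F$ and the relative compactness of $\{h_n\}$ in $\dc^*_\omega(F)$.

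The crux is a uniform-in-$n$ bound on $(I-M_{\scf_{\hbar h_n}r_n}\oq)^{-1}$. In Theorem \ref{thm:sa1} the symbol $\scf_{h_n}r_n$ lies in $\dc(\Gamma)$, so the commutator $[M_{\scf_{h_n}r_n},\oq]$ is compact by a Hartman-type result for Hankel operators with Dini-continuous symbol on analytic curves, and the smallness produced by $r_n=o(1)$ off $F$ closes the matter. Here, by (\ref{eq:scatjumps}), $\scf_{\hbar h_n}r_n$ is only piecewise Dini-continuous on $\Gamma$, with nonzero jumps at the images of the points of $F_0$. The plan is to split $\scf_{\hbar h_n}r_n=g_n+j_n$, where $g_n\in\dc(\Gamma)$ and $j_n$ is a finite linear combination of elementary jump functions with disjoint supports localized near the points of $F_0$ and matching there the jumps of $\scf_{\hbar h_n}r_n$ exactly. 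A standard computation then yields
\[
\|M_{j_n}\oq\|\leq\tfrac{1}{2}\max_{x\in F_0}\left|(\scf_{\hbar h_n}r_n)^\pm(x^+)-(\scf_{\hbar h_n}r_n)^\pm(x^-)\right|\cdot\|\oq\|,
\]
which by hypothesis (\ref{eq:upsilon}) is strictly less than $1$ for all $n$ large, so $I-M_{j_n}\oq$ is invertible via a Neumann series with uniform operator-norm bound.

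The remaining operator $I-M_{\scf_{\hbar h_n}r_n}\oq=(I-M_{j_n}\oq)-M_{g_n}\oq$ is then a compact perturbation of something invertible, hence Fredholm of index zero; triviality of its kernel is inherited from the unique solvability of the underlying orthogonality problem (or equivalently by a homotopy argument deforming $\hbar$ continuously to $1$ and invoking Theorem \ref{thm:sa1} at the endpoint). This yields invertibility with a uniform-in-$n$ bound on the inverse, and the announced $\limsup$ estimate on $\int_F(|d_n^-|^2+|d_n^+|^2)/\sqrt{|1-t^2|}\,|dt|$ follows immediately. The boundary relations (\ref{eq:sa2}) on $F$ can then be read off, and the locally uniform asymptotics (\ref{eq:sa1}) in $D$ are obtained by applying Cauchy's formula to $A_n-1$ and $B_n-1$, exactly as in the proof of Theorem \ref{thm:sa1}. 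Finally, since $(q_n\szf_n)(\infty)=1+o(1)\neq0$, one concludes $\deg q_n=n$ for all sufficiently large $n$, legitimizing the monic normalization. The principal obstacle throughout is the norm estimate on $M_{j_n}\oq$: the factor $2/\|\oq\|$ in (\ref{eq:upsilon}) is not accidental but encodes precisely this estimate, which itself rests on the theory of singular integral operators with piecewise continuous coefficients on analytic (hence Carleson) curves.
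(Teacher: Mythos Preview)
Your approach diverges substantially from the paper's, and there is a genuine gap.

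\medskip
\textbf{What the paper actually does.} The paper does \emph{not} set up an operator $(I-M_\phi\oq)$ and invert it. Instead it follows Theorem~\ref{thm:sa1} almost verbatim: normalize $\hat a_{n,2}:=\hat a_n/\|\hat a_n\|_2$, write $d_{n,2}=\oq(\hat a_{n,2}\hat r_n\hat c_n\hat c)$, and then use the second hypothesis of the theorem (relative compactness of $\{|r_n\circ\map^{-1}|\}$ in $\dc(\Gamma)$) to produce the inner--outer factorization $\hat r_n=\hat r_n^i\hat r_n^o$ of Section~\ref{subsec:aux_cf}. The outer factor $\hat r_n^o$ is absorbed into the Hankel \emph{symbol} (so the symbol becomes $\hat c_n\hat r_n^o\hat c$, piecewise continuous with jumps exactly matching those in \eqref{eq:upsilon}), while $\hat a_{n,2}\hat r_n^i$ is a \emph{unit-norm weakly null} sequence in $E_+^2$. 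The estimate at the end of Section~\ref{subsec:aux_th} then gives directly
\[
\limsup_{n\to\infty}\|d_{n,2}\|_2\;\le\;\tfrac12\|\oq\|\max_{\zeta\in\Gamma_0}\limsup_{n\to\infty}\bigl|(\hat c_n\hat r_n^o\hat c)(\zeta^+)-(\hat c_n\hat r_n^o\hat c)(\zeta^-)\bigr|<1,
\]
from which \eqref{eq:anfinite} and the rest follow. No operator is inverted; no kernel needs to be analyzed.

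\medskip
\textbf{Where your argument breaks.} First, you never use the second bullet of the hypotheses; the inner--outer step is precisely what that assumption is for, and without it you cannot localize the $r_n$-contribution to the jump points (your bound on $\|M_{j_n}\oq\|$ would carry $\|r_n^\pm\|_\infty$ rather than $|r_n^\pm(x)|$, which does not match \eqref{eq:upsilon}). Second, and more seriously, your ``trivial kernel'' step is not justified. The alternative (i)---``inherited from the unique solvability of the underlying orthogonality problem''---is circular: that $q_n$ has exact degree $n$ is a \emph{conclusion} of the theorem, not an input, and in the non-Hermitian setting there is no a priori reason the kernel is trivial. The alternative (ii)---a homotopy in $\hbar$---controls the Fredholm index but not the kernel dimension, which can jump along a continuous path of Fredholm operators. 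Third, even granting invertibility for each $n$, you assert a \emph{uniform-in-$n$} bound on the inverse without argument; for a sequence of Fredholm-index-zero operators this does not follow automatically. Finally, the precise form of the integral equation you write is not derived: the boundary relation actually couples $d_{n,2}$ with $d_{n,2}(1/\cdot)$ (an involution on $\Gamma$ appears), so the operator is not simply $I-M_\phi\oq$.
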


Let us make several remarks. As in the case of Theorem \ref{thm:sa1}, the requirement of analyticity of $F$ can be weakened. It is enough to assume $F$ to be piecewise Dini-smooth without cusps. This condition is sufficient for Section \ref{subsec:aux_cf} to remain valid, which is the main additional ingredient in the proof of Theorem \ref{thm:sa2} as compared to the proof of Theorem \ref{thm:sa1}.

As clear from the definition of $\oq$, the lower bound in the inequality $\|\oq\|\geq1$ is achieved when $F=[-1,1]$.

To obtain an explicit upper bound for the numbers $\alpha_x$, $x \in F_0$, for which the theorem holds, rewrite (\ref{eq:upsilon}) in view of (\ref{eq:scatjumps}) as
\[
\sin(\alpha_x\pi)\left|\map^\mp(x)\right|^{2\sum_{y\in F_0}\alpha_y} \limsup_{n\to\infty}|(\scf_{h_n}r_n)^\pm(x)| < \|\oq\|^{-1}.
\]
Thus, since $2\alpha_y<1$, $y\in F_0$, (\ref{eq:upsilon}) is satisfied for example when
\[
\sin(\alpha_x\pi) < \|\oq\|^{-1} \min\{|\map^-(x)|^L,|\map^+(x)|^L\} \liminf_{n\to\infty}|(\scf_{h_n}r_n)^\pm(x)|
\]
where $L$ is the number of elements in $F_0$. In particular, if $F=[-1,1]$, the functions $h_n$ are positive, and the sets $E_n$ are conjugate-symmetric, then $|\map^\pm|\equiv|\scf_{h_n}^\pm|\equiv|r_n^\pm|\equiv1$ and (\ref{eq:upsilon}) simply reduces to the initial condition $2\alpha_x<1$, $x\in F_0$.

\subsection{Multipoint Pad\'e Approximation}
\label{subsec:pade}

Let $\mu$ be a complex Borel measure with compact support. We define the Cauchy transform of $\mu$ as
\begin{equation}
\label{eq:CauchyT}
f_\mu(z) := \int\frac{d\mu(t)}{z-t}, \quad z\in\overline\C\setminus\supp(\mu).
\end{equation}
Clearly, $f_\mu$ is a holomorphic function in $\overline\C\setminus\supp(\mu)$ that vanishes at infinity.

Classically, diagonal (multipoint) Pad\'e approximants to $f_\mu$ are rational functions of type $(n,n)$ that interpolate $f_\mu$ at a prescribed system of $2n+1$ points. However, when the approximated function is of the from (\ref{eq:CauchyT}), it is customary to place at least one interpolation point at infinity. More precisely, let $\E=\{E_n\}$ be a triangular scheme of points in $\overline\C\setminus\supp(\mu)$ and let $v_n$ be the monic polynomial with zeros at the finite points of $E_n$.

\begin{df}[Multipoint Pad\'e Approximant]
Given $f_\mu$ of type \eqref{eq:CauchyT} and a triangular scheme $\E$, the $n$-th diagonal Pad\'e approximant to $f_\mu$ associated with $\E$ is the unique rational function $\Pi_n=p_n/q_n$ satisfying:
\begin{itemize}
\item $\deg p_n\leq n$, $\deg q_n\leq n$, and $q_n\not\equiv0$;
\item $\left(q_n(z)f_\mu(z)-p_n(z)\right)/v_n(z)$ is analytic in $\overline\C\setminus\supp(\mu)$;
\item $\left(q_n(z)f_\mu(z)-p_n(z)\right)/v_n(z)=O\left(1/z^{n+1}\right)$ as $z\to\infty$.
\end{itemize}
\end{df}

A multipoint Pad\'e approximant always exists since the conditions for $p_n$ and $q_n$ amount to solving a system of $2n+1$ homogeneous linear equations with $2n+2$ unknown coefficients, no solution of which can be such that $q_n\equiv0$ (we may thus assume that $q_n$ is monic); note that the required interpolation at infinity is entailed by the last condition and therefore $\Pi_n$ is, in fact, of type $(n-1,n)$.

The following theorem is an easy consequence of Theorems \ref{thm:sp} and \ref{thm:sa1}.

\begin{thm}
\label{thm:pade}
Let $F$ be an analytic Jordan arc connecting $\pm1$. Then there always exist triangular schemes such that $F$ is symmetric with respect to them. Let $\E$ be any such scheme and $f_\mu$ be given by \eqref{eq:CauchyT} with
\begin{equation}
\label{eq:measuremu}
d\mu(t) = \dot\mu(t)\frac{idt}{\pi w^+(t)}, \quad \dot\mu\in\dc^*(F), \quad \supp(\mu) = F.
\end{equation}
Then the sequence of diagonal Pad\'e approximants to $f_\mu$ associated with $\E$, $\{\Pi_n\}$, is such that
\begin{equation}
\label{eq:speedconv}
(f_\mu-\Pi_n)w = [2\gm_{\dot\mu} + o(1)]\szf^2_{\dot\mu}r_n \quad \mbox{locally uniformly in} \quad D.
\end{equation}
\end{thm}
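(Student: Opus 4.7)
The first ingredient is immediate: in Theorem~\ref{thm:sp}, the implication (c)$\Rightarrow$(a) guarantees the existence of triangular schemes $\E$ with $\supp(\E)\subset D$ with respect to which the analytic arc $F$ is symmetric, so I would fix any such $\E$ and the associated polynomials $v_n$ and Blaschke-type products $r_n$. The plan is to reduce the Pad\'e error to the setting of Theorem~\ref{thm:sa1}. Writing $\Pi_n=p_n/q_n$ and choosing $p_n$ so that $(q_nf_\mu-p_n)(z)=\int_F q_n(t)(z-t)^{-1}\,d\mu(t)$, the Pad\'e interpolation conditions at the finite points of $E_n$ and at infinity translate into the non-Hermitian orthogonality \eqref{eq:orthogonality} for $q_n$ with varying weight $w_n=\dot\mu/v_n$. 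Since $\dot\mu\in\dc^*(F)$, the constant sequence $h_n\equiv\dot\mu$ is trivially relatively compact in $\dc^*_\omega(F)$ for $\omega=\omega_{\dot\mu}$, so all hypotheses of Theorem~\ref{thm:sa1} are met.

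Next I would relate the Pad\'e error to the function of the second kind $R_n$ of \eqref{eq:secondkind}. Both $(q_nf_\mu-p_n)/v_n$ and $R_n$ are holomorphic in $D$ and of order $O(z^{-n-1})$ at infinity (for $R_n$ this comes from the orthogonality of $q_n$, for $(q_nf_\mu-p_n)/v_n$ it is built into the Pad\'e definition). A Sokhotski--Plemelj computation shows $f_\mu^+-f_\mu^-=2\dot\mu/w^+$ on $F$, and therefore the jumps of $(q_nf_\mu-p_n)/v_n$ and $R_n$ across $F$ both equal $2q_nw_n/w^+$. Their difference is thus entire and vanishes at infinity, forcing $q_nf_\mu-p_n=R_nv_n$. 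Consequently
\[
(f_\mu-\Pi_n)w \;=\; \frac{(R_nw)\,v_n}{q_n} \;=\; [1+o(1)]\,\gamma_n\,\szf_n^2\,v_n \;=\; 2[1+o(1)]\,\frac{\gm_{w_n}\,\szf_{w_n}^2\,v_n}{\map^{2n}}
\]
locally uniformly in $D$, where the second equality invokes Theorem~\ref{thm:sa1} and the third collapses the powers of $2$ hidden in $\szf_n=(2/\map)^n\szf_{w_n}$ and $\gamma_n=2^{1-2n}\gm_{w_n}$.

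It remains to identify the rightmost expression with $[2\gm_{\dot\mu}+o(1)]\szf_{\dot\mu}^2 r_n$. Using the multiplicativity $\gm_{w_n}=\gm_{\dot\mu}/\gm_{v_n}$ and $\szf_{w_n}=\szf_{\dot\mu}/\szf_{v_n}$ (valid with consistent branches of $\log$), this reduces to the purely algebraic identity $v_n/(\gm_{v_n}\szf_{v_n}^2\map^{2n})=r_n$. I would first prove, for a single finite point $e\in D$, the relation
\[
\frac{z-e}{\szf_{t-e}^2(z)} \;=\; -\tfrac12\,\map(e)\,\map(z)\,r(e;z), \qquad \gm_{t-e} \;=\; -\map(e)/2,
\]
by noting that both sides are meromorphic in $D$ with a single simple zero (at $e$), no other zeros, and a simple pole at infinity, and that both have boundary values on $F$ whose pointwise product equals $\gm_{t-e}^2=\map(e)^2/4$ (on the left from the Szeg\H{o} factorization $h=\gm_h\szf_h^+\szf_h^-$, on the right from $\map^+\map^-=r^+r^-=1$). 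A Liouville-type argument for their ratio $u$ --- $u$ is holomorphic non-vanishing in $D$, $u(\infty)=1$, $u^+u^-=1$ on $F$; the analyticity of $F$ together with the Dini regularity of the Szeg\H{o} function at the endpoints (Section~\ref{sec:aux}) ensures boundedness of $u$ near $\pm1$, hence $uw$ continues to an entire linear function vanishing at $\pm1$, so $u\equiv1$ --- pins down the constant, and comparison at infinity identifies $\gm_{t-e}$. Taking the product over the finite points of $E_n$ (points at infinity in $E_n$ contribute factors $1/\map$ to $r_n$ only, and the powers of $\map$ rearrange consistently) then yields $v_n/(\gm_{v_n}\szf_{v_n}^2\map^{2n})=r_n$, which combined with the previous display gives \eqref{eq:speedconv}. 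The main technical hazard, as usual in Szeg\H{o} theory on an arc, is the careful tracking of $\log$-branches so that the multiplicativity of $\gm$ and $\szf$ holds exactly; everything else is bookkeeping once Theorems~\ref{thm:sp} and \ref{thm:sa1} are in hand.
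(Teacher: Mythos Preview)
Your proof is correct and follows essentially the same route as the paper: reduce to Theorem~\ref{thm:sa1} with $h_n\equiv\dot\mu$, identify $f_\mu-\Pi_n=v_nR_n/q_n$, apply \eqref{eq:sa1}, and simplify via the identity $\szf_{v_n}^2=v_n/(\gm_{v_n}r_n\map^{2n})$. The only cosmetic differences are that you derive $q_nf_\mu-p_n=R_nv_n$ by a jump/Liouville argument (the paper uses the integral error formula directly) and that you reprove the polynomial Szeg\H{o} identity from scratch, whereas the paper simply invokes \eqref{eq:szegopoly}, which was already established in Section~\ref{subsec:aux_szf}.
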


The proof of Theorem \ref{thm:pade} combined with Theorem \ref{thm:sa2} yields the following.

\begin{cor}
Let $F$ an analytic arc connecting $\pm1$ that is symmetric with respect to a triangular scheme $\E$. Further, let $h\in\dc^*(F)$, $F_0\subset F$ be a finite set of distinct points, and $\hbar(F_0;\cdot)$ be given by \eqref{eq:VF} for some choice of the parameters $\alpha_x<1/2$, $x\in F_0$. If functions $r_n$, associated to $\E$, are such that $\{|r_n\circ\map^{-1}|\}$ is relatively compact in $\dc(\Gamma)$, $\Gamma=\partial \map(D)$, and \eqref{eq:upsilon} is satisfied with $h_n=h$ and $\hbar=\hbar(F_0;\cdot)$, then the conclusion of Theorem \ref{thm:pade} remains valid for $f_\mu$ with $\dot\mu=\hbar h$.
\end{cor}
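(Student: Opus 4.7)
The plan is to mimic the proof of Theorem \ref{thm:pade} verbatim, with Theorem \ref{thm:sa2} substituted for Theorem \ref{thm:sa1} at the point where strong asymptotics are invoked. First, I would check that Theorem \ref{thm:sa2} is applicable. The constant sequence $h_n\equiv h$ is trivially relatively compact in $\dc^*_\omega(F)$ with $\omega=\omega_h$, and the remaining hypotheses --- analyticity of $F$, symmetry with respect to $\E$, relative compactness of $\{|r_n\circ\map^{-1}|\}$ in $\dc(\Gamma)$, and the jump condition \eqref{eq:upsilon} with $\hbar=\hbar(F_0;\cdot)$ --- are direct assumptions of the corollary. The denominator $q_n$ of the $n$-th diagonal Pad\'e approximant to $f_\mu$ with $\dot\mu=\hbar h$ satisfies the non-Hermitian orthogonality \eqref{eq:orthogonality} with $w_n=\hbar h/v_n$, obtained by imposing both the analyticity of $(q_nf_\mu-p_n)/v_n$ in $D$ and the required order of vanishing at infinity; this is exactly the input required by Theorem \ref{thm:sa2}.

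Next I would reproduce the clean error identity $(f_\mu-\Pi_n)(z)=v_n(z)R_n(z)/q_n(z)$ on $D$. Starting from $(q_nf_\mu-p_n)(z)=\int_F q_n(t)\,d\mu(t)/(z-t)$, inserting the identity $v_n(t)/v_n(t)$ and splitting $v_n(t)/(z-t)=v_n(z)/(z-t)-(v_n(z)-v_n(t))/(z-t)$, the first piece becomes $v_n(z)R_n(z)$ after recognizing the definition \eqref{eq:secondkind}, while the second piece, upon expansion in $t$, produces a polynomial in $z$ of degree at most $n-1$ once orthogonality kills the low-degree part. The analyticity of $(q_nf_\mu-p_n)/v_n$ in $D$ then forces this polynomial to vanish at all $2n$ zeros of $v_n$, hence identically.

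Applying Theorem \ref{thm:sa2} yields $q_n=[1+o(1)]/\szf_n$ and $R_nw=[1+o(1)]\gamma_n\szf_n$ locally uniformly in $D$. Substituting into the error identity and using $\szf_n=(2/\map)^n\szf_{w_n}$ together with $\gamma_n=2^{1-2n}\gm_{w_n}$, one obtains
\[
(f_\mu-\Pi_n)w=[1+o(1)]\cdot 2\,\gm_{w_n}\szf_{w_n}^2\cdot v_n/\map^{2n}.
\]
The multiplicativities $\gm_{w_n}=\gm_{\hbar h}/\gm_{v_n}$ and $\szf_{w_n}=\szf_{\hbar h}/\szf_{v_n}$ (with branch conventions organized as in Section \ref{subsec:aux_szf}), combined with the algebraic identity $v_n(z)=r_n(z)\gm_{v_n}\szf_{v_n}(z)^2\map(z)^{2n}$ --- which holds because $v_n/(r_n\gm_{v_n}\map^{2n})$ is non-vanishing holomorphic in $D$, equals $1$ at infinity, and has boundary values on $F$ matching $\szf_{v_n}^2$ by uniqueness of the Szeg\H{o} function --- collapse the right-hand side to $[2\gm_{\dot\mu}+o(1)]\szf_{\dot\mu}^2 r_n$ with $\dot\mu=\hbar h$, which is exactly \eqref{eq:speedconv}.

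The hardest point, I anticipate, is to make precise the Szeg\H{o} factorization $\szf_{\hbar h}$ and the multiplicativity $\szf_{\hbar h}=\szf_\hbar\szf_h$ across the vanishing set $F_0$, where $\log(\hbar h)$ has integrable singularities and $\scf_\hbar^\pm$ acquires the jumps \eqref{eq:scatjumps}. This is the content of Section \ref{subsec:aux_szf}, while the hypothesis \eqref{eq:upsilon} provides exactly the smallness needed to keep the Hankel--Toeplitz equation underlying Theorem \ref{thm:sa2} coercive. Once the factorization is set up, the purely algebraic identity for $v_n$ (unaffected by any vanishing of the density) and the local uniform form of \eqref{eq:sa1} --- identical to that of Theorem \ref{thm:sa1} --- carry the proof of Theorem \ref{thm:pade} through to the present setting.
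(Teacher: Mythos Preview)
Your proposal is correct and follows essentially the same approach as the paper, which does not give a separate proof of the corollary but simply states that combining the proof of Theorem~\ref{thm:pade} with Theorem~\ref{thm:sa2} yields the result. You carry this out in detail: verifying that Theorem~\ref{thm:sa2} applies to the constant family $h_n\equiv h$, recovering the error identity $f_\mu-\Pi_n=v_nR_n/q_n$ via orthogonality, and then reducing to \eqref{eq:speedconv} through the Szeg\H{o}-function identity \eqref{eq:szegopoly}, exactly as in the paper's proof of Theorem~\ref{thm:pade}.
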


As a final remark of this section, let us mention that it is possible to consider in Theorem \ref{thm:pade} the Radon-Nikodym derivative of $\mu$ with respect to the equilibrium distribution on $F$. The later will be different from $\dot\mu$ by a factor $g(\map)\map$, where $g$ is a non-vanishing function analytic across $\Gamma$. However, as apparent from (\ref{eq:speedconv}), the adopted decomposition of $\mu$ is more convenient.

\section{Auxiliary Material}
\label{sec:aux}

In this section we provide the necessary material to proceed with the proofs of the main results. Section \ref{subsec:aux_jouk} describes properties of $\Gamma$, the boundary of $\map(D)$. In Sections \ref{subsec:aux_cio} and \ref{subsec:aux_szf} we constructively define Szeg\H{o} functions, show that formula (\ref{eq:szegodecomp}) indeed takes place, and compute several examples that we use throughout the proofs. The main source for these sections is the excellent monograph \cite{Gakhov}. Sections \ref{subsec:aux_smirnov} and \ref{subsec:aux_th} are concerned with the so-called Smirnov classes on Jordan curves and their content can be found in much greater detail in \cite{BottcherKarlovich}. Finally, Section \ref{subsec:aux_cf} carries onto analytic Jordan domains certain properties of inner-outer factorization of analytic functions on the unit disk.

\subsection{Joukovski Transformation}
\label{subsec:aux_jouk}

Let $F$ be a rectifiable Jordan arc connecting $\pm1$, oriented from $-1$ towards $1$, and let $\map$ be defined by (\ref{eq:map}). Then $\map$ has continuous injective extensions $\map^\pm$ onto $F$ that are conformal on $F\setminus\{\pm1\}$. Denote
\[
\Gamma: = \Gamma^+\cup\Gamma^-, \;\;\; \Gamma^\pm:=\map^\pm(F).
\]
By what precedes, $\Gamma^\pm\setminus\{\pm1\}$ are open Jordan arcs with endpoints $\pm1$ and by (\ref{eq:boundary})
\begin{equation}
\label{eq:reciprocity}
\Gamma^-=(\Gamma^+)^{-1} := \left\{z:~ 1/z\in\Gamma^+\right\}.
\end{equation}
Observe that $\map$ is inverse to $\jt(z):=(z+1/z)/2$, the usual Joukovski transformation, in $D=\overline\C\setminus F$. Since $\jt$ maps $z$ and $1/z$ into the same point and $\map^\pm(z)=\pm1$ if and only if $z=\pm1$, $\Gamma^+ \cap \Gamma^- = \{-1,1\}$ and therefore $\Gamma$ is a Jordan curve.

Assume, in addition, that for $x=\pm1$ and all $t\in F$ sufficiently close to $x$ it holds that $|F_{t,x}|\leq\const|x-t|^\beta$, $\beta>1/2$, where $|F_{t,x}|$ is the arclenth of the subsarc of $F$ joining $t$ and $x$ and $\const$ is some absolute constant. We shall show that this implies rectifiability of $\Gamma$. As $\map^\pm$ are conformal on $F\setminus\{\pm1\}$, it is enough to prove that $\Gamma$ has finite length around $\pm1$. By (\ref{eq:reciprocity}), it is, in fact, sufficient to consider only $\Gamma^+$. Let $s$ be a parametrization of $F$ around $1$ such that $s(0)=1$ and for all $\delta\in[0,\delta_0]$, $s(\delta_0)\neq-1$, it holds that
\[
\ell(\delta) := |F_{s(\delta),1}| = \int_{[0,\delta]}|s^\prime(y)|dy \leq \const|1-s(\delta)|^\beta.
\]
As $\map^+\circ s$ is a parametrization of $\Gamma^+$ around 1, we get that
\begin{eqnarray}
|\map^+(F_{s(\delta),1})| &=& \int_{[0,\delta]}|\left(\map^+(s(y))\right)^\prime|dy = \int_{[0,\delta]}\left|\frac{\map^+(s(y))}{w^+(s(y))}\right||s^\prime(y)|dy \nonumber \\
{} &\leq& \const \int_{[0,\delta]}\frac{|s^\prime(y)|}{\sqrt{|1-s(y)|}}dy \nonumber
\end{eqnarray}
since $(\map^+)^\prime=\map^+/w^+$, $|\map^+|$ is bounded above on $F$, and $|1+s(y)|$ is bounded away from zero on $[0,\delta_0]$. As $\ell^\prime(y)=|s^\prime(y)|$  and $|1-s(y)|\geq \const\ell^{1/\beta}(y)$ on $[0,\delta_0]$, we deduce that
\[
|\map^+(F_{s(\delta),1})| \leq \const \int_{[0,\delta]}\frac{\ell^\prime(y)}{\ell^{1/2\beta}(y)}dy = \const\ell^{1-1/2\beta}(\delta) < \infty
\]
since $1-1/2\beta>0$. Clearly, analogous bound holds around $-1$. Hence, $\Gamma^+$ and therefore $\Gamma$ are rectifiable.

Now, we show that $\Gamma$ is an analytic Jordan curve whenever $F$ is an analytic Jordan arc. In other words, we show that in this case there exists a holomorphic univalent function in some neighborhood of the unit circle that maps $\T$ onto $\Gamma$. Let $U$ be a neighborhood of $\T$ such that $\jt(U)$ lies in the domain of $p$, where $p$ is a holomorphic univalent parametrization of $F$, $F=p([-1,1])$. Define
\[
\Phi(z) := \left\{
\begin{array}{ll}
1/\map(p(\jt(z))),   & z\in U^+:=U\cap \D, \\
\map(p(\jt(z))), & z\in U^-:=U\setminus\overline\D.
\end{array}
\right.
\]
Then $\Phi$ is a sectionally holomorphic function on $U\setminus\T$. Denote by $\Phi^\pm$ the traces of $\Phi$ from $U^\pm$ on $\T$. Then for $\tau\in\T$, we have
\begin{eqnarray}
\Phi^-(\tau) &=& \left\{
\begin{array}{ll}
1/\map^+(p(\jt(\tau))), & \im(\tau)\geq0 \\
1/\map^-(p(\jt(\tau))), & \im(\tau)<0
\end{array}
\right. \nonumber \\
{} &=& \left\{
\begin{array}{ll}
\map^-(p(\jt(\tau))), & \im(\tau)\geq0 \\
\map^+(p(\jt(\tau))), & \im(\tau)<0
\end{array}
\right. = \Phi^+(\tau). \nonumber
\end{eqnarray}
Thus, $\Phi$ is a holomorphic injective function on $U$ that maps $\T$ into $\Gamma$ and therefore $\Gamma$ is an analytic Jordan curve.

\subsection{The Cauchy Integral Operator}
\label{subsec:aux_cio}

Let $\Gamma$ be an analytic Jordan curve and denote by $D^+$ and $D^-$ the bounded and unbounded components of the complement of $\Gamma$, respectively. Further, let $\phi$ be an integrable function on $\Gamma$. The {\it Cauchy integral operator} on $\Gamma$ is defined as
\begin{equation}
\label{eq:oc}
\oc\phi(z) := \frac{1}{2\pi i}\int_\Gamma\frac{\phi(\tau)}{\tau-z}d\tau, \;\;\; z\not\in\Gamma.
\end{equation}
Clearly, $\oc\phi$ is a sectionally holomorphic function on $\C\setminus\Gamma$ and
\[
\oc\phi(z) = \left\{
\begin{array}{ll}
\phi(t) + A_\phi(t,z), & z\in D^+, \\
A_\phi(t,z), & z\in D^-,
\end{array}
\right. \quad t\in \Gamma,
\]
where
\[
A_\phi(t,z) := \frac{1}{2\pi i}\int_\Gamma \frac{\phi(\tau)-\phi(t)}{\tau-z}d\tau.
\]
It is an easy modification of \cite[Sec. 4.1]{Gakhov} to see that $A_\phi(t,z)$ is a continuous function of $z$ for each $t\in\Gamma$ when $\phi\in\dc(\Gamma)$. Therefore, in this case, $\oc\phi_{|D^\pm}$ have continuous unrestricted boundary values $(\oc\phi)^\pm$ such that
\begin{equation}
\label{eq:pls}
(\oc\phi)^+ - (\oc\phi)^- = \phi.
\end{equation}
Moreover, $\oc\phi$ is the unique sectionally holomorphic function in $\overline\C\setminus\Gamma$ that satisfies (\ref{eq:pls}) and vanishes at $\infty$.

Let now $\{\phi_n\}\subset\dc_\omega(\Gamma)$ be a uniformly convergent sequence. Denote by $\phi$ the limit function of $\{\phi_n\}$ and observe that $\phi$ necessarily belongs to $\dc_\omega(\Gamma)$. We want to show that $(\oc\phi_n)^\pm$ converge uniformly on $\Gamma$ to $(\oc\phi)^\pm$, respectively. Since $\oc$ is additive, we may suppose without loss of generality that $\phi\equiv0$. Then
\begin{eqnarray}
|(\oc\phi_n)^-(t)| &=& |A_{\phi_n}(t,t)| \leq \int_\Gamma \frac{|\phi_n(\tau)-\phi_n(t)|}{|\tau-t|}\frac{|d\tau|}{2\pi} \nonumber \\
{} &\leq& k_1\int_{[0,s_0]}\frac{\omega_{\phi_n}(s)}{s}ds = k_1\left(\int_{[0,s_n]}+\int_{[s_n,s_0]}\right)\frac{\omega_{\phi_n}(s)}{s}ds \nonumber \\
{} &\leq& k_1\int_{[0,s_n]}\frac{\omega(s)}{s}ds + 2k_1(s_0-s_n)\frac{\|\phi_n\|_\Gamma}{s_n}, \nonumber
\end{eqnarray}
where $s_0:=\diam(\Gamma)$ and $k_1$ is a positive constant that depends only on $\Gamma$. Now, by choosing $s_n$ in such a manner that $s_n$ and $\|\phi_n\|_\Gamma/s_n$ both converge to zero as $n\to\infty$, we obtain that $(\oc\phi_n)^-$ converges to zero uniformly on $\Gamma$. The convergence of $(\oc\phi_n)^+$ then follows from (\ref{eq:pls}).

Finally, suppose that $\Gamma$ is such that $\tau$ and $1/\tau$ belong to $\Gamma$ simultaneously. Assume in addition that $\phi\in\dc(\Gamma)$ is such that $\phi(\tau)=\phi(1/\tau)$. Then
\[
\oc\phi(1/z) = \oc\phi(0) - \oc\phi(z), \quad z\notin\Gamma,
\]
$(\oc\phi)^\pm\in C(\Gamma)$, and by (\ref{eq:pls})
\begin{equation}
\label{eq:sym_pls}
(\oc\phi)^+(\tau) + (\oc\phi)^+(1/\tau) = \phi(\tau)+\oc\phi(0), \quad \tau\in\Gamma.
\end{equation}
Suppose further that $\pm1\in\Gamma$ and denote $F=\jt(\Gamma)$. Then $F$ is an analytic Jordan arc with endpoints $\pm1$. Let $\psi\in\dc(F)$ and $\phi = \psi \circ \jt$. Clearly, $\phi\in\dc(\Gamma)$ and $\phi(\tau)=\phi(1/\tau)$, $\tau\in\Gamma$. Set
\[
R(\zeta) := \frac{w(\zeta)}{2\pi i}\int_F \frac{\psi(t)}{t-\zeta}\frac{dt}{w^+(t)}, \quad \zeta\in D=\overline\C\setminus F.
\]
As
\[
\frac{d\jt(\tau)}{w^+(\jt(\tau))} = \left\{
\begin{array}{ll}
-d\tau/\tau, & \tau\in\Gamma^- \\
d\tau/\tau, & \tau\in\Gamma^+
\end{array}
\right.,
\]
where $\Gamma^\pm$ have the same meaning as in Section \ref{subsec:aux_jouk}, and since $w(\zeta) = (1-z^2)/(2z)$, $\zeta=\jt(z)$, $z\in D^+$, we have for $t=\jt(\tau)$ that
\begin{eqnarray}
R(\zeta) &=& \frac{1-z^2}{4\pi i}\int_\Gamma \frac{\phi(\tau)}{(\tau-z)(1-z\tau)}d\tau = \frac{1}{4\pi i}\int_\Gamma \frac{\phi(\tau)}{\tau-z}d\tau + \frac{z}{4\pi i}\int_\Gamma \frac{\phi(\tau)}{1-z\tau}d\tau \nonumber \\
\label{eq:translation}
{} &=& \frac{1}{2\pi i}\int_\Gamma \frac{\phi(\tau)}{\tau-z}d\tau - \frac{1}{4\pi i}\int_\Gamma \frac{\phi(\tau)}{\tau}d\tau = \oc\phi(z) - \frac12\oc\phi(0),
\end{eqnarray}
where we used the symmetry of $\phi$, i.e. $\phi(\tau)=\phi(1/\tau)$. Then we get from (\ref{eq:translation}) and (\ref{eq:sym_pls}) that
\begin{equation}
\label{eq:wSokhotski}
R^\pm\in C(F), \quad R^+(\pm1)=R^-(\pm1), \quad \mbox{and} \quad R^+ + R^- = \psi \quad \mbox{on} \quad F.
\end{equation}

\subsection{Szeg\H{o} Functions}
\label{subsec:aux_szf}

Let $F$ be an analytic oriented Jordan arc connecting $-1$ and $1$ and $h\in\dc^*(F)$. Since $|\Log(1+z)| \leq 2|z|$, $|z|<1/2$, where $\Log$ is the principal branch of the logarithm, we have that
\begin{equation}
\label{eq:modcontlog}
|\log h(t_1)-\log h(t_2)| = |\Log(h(t_1)/h(t_2))| \leq 2\frac{\omega_h(|t_1-t_2|)}{\min_F h}, 
\end{equation}
whenever $2\omega_h(|t_1-t_2|) \leq \min_F h$. Thus, $\log h\in\dc(F)$ for any continuous branch of the logarithm of $h$. Let now $\Gamma$ be the preimage of $F$ under the Joukovski transformation, $\jt$, and $\Gamma^\pm=\map^\pm(F)$. Further, let $\phi := \log h \circ \jt$. Then, changing variables as in (\ref{eq:translation}), we have that
\[
\gm_h = \exp\left\{\int_F \log h(t) \frac{idt}{\pi w^+(t)}\right\} = \exp\left\{\int_\Gamma \frac{\phi(\tau)}{\tau}\frac{d\tau}{2\pi i}\right\} = \exp\left\{\oc\phi(0)\right\} \neq 0
\]
and does not depend on the branch of the logarithm we chose. Indeed, different continuous branches of the logarithm of $h$ give rise to the same $\phi$ up to an additive integer multiple of $2\pi i$, and since $\oc(\phi+c)(0)=\oc\phi(0)+c$ for any constant $c$, the claim follows. Further, (\ref{eq:translation}) yields that
\[
\szf_h(\zeta) = \exp\left\{\oc\phi(z)-\oc\phi(0)\right\}, \quad \zeta=\jt(z)\in D, \quad z\in D^+.
\]
Hence, $\szf_h$ is a well-defined non-vanishing holomorphic function in $D$ that has value 1 at infinity and satisfies
\begin{equation}
\label{eq:boundaryvalues}
\szf_h^+(t)\szf_h^-(t) =  \exp\left\{\phi(\tau)-\oc\phi(0)\right\} = h(t)/\gm_h, \quad t=\jt(\tau)\in F,
\end{equation}
by (\ref{eq:translation}) and (\ref{eq:sym_pls}). Moreover, (\ref{eq:wSokhotski}) tells us that
\begin{equation}
\label{eq:endpoints}
\szf_h^\pm\in C(F) \quad \mbox{and} \quad \szf_h^+(\pm1) = \szf_h^-(\pm1).
\end{equation}
Now, if $h_1,h_2\in\dc^*(F)$ then $\szf_{h_1h_2} = \szf_{h_1}\szf_{h_2}$ since there always exists $k\in\Z$ such that
\[
\log(h_1h_2) = \log h_1 + \log h_2 + 2\pi i k \quad \mbox{on} \quad F
\]
for any continuous branches of the logarithms of $h_1$ and $h_2$. We shall refer to this property as to the multiplicativity property of Szeg\H{o} functions. It remains only to say that $\szf_h$ can be characterized as the unique holomorphic non-vanishing function in $D$ that assumes the value 1 at infinity, has bounded boundary values on $F$, and satisfies (\ref{eq:boundaryvalues}). Indeed, this follows from the uniqueness of $\oc\phi$ as the sectionally holomorphic function satisfying (\ref{eq:pls}), vanishing at infinity, and having bounded boundary values.

Employing the characterization described above, we can provide an explicit expression for the Szeg\H{o} function of a polynomial. Namely, let $E_n$ be a set of $2n$ not necessarily distinct nor finite points in $D$, $v_n$ be a polynomial with zeros at the finite points of $E_n$, and $r_n$ be the function associated to $E_n$ via (\ref{eq:rn}). Then we get from (\ref{eq:boundary}) and (\ref{eq:boundaryR}) that
\begin{equation}
\label{eq:szegopoly}
\szf_{v_n}^2 = \frac{1}{\gm_{v_n}}\frac{v_n}{r_n\map^{2n}}.
\end{equation}

Proceeding now with the investigation of the properties of Szeg\H{o} functions, we show that $\{\szf_{h_n}^\pm\}$ form relatively compact families in $C^*(F)$ whenever $\{h_n\}$ is a relatively compact family in $\dc^*_\omega(F)$. Without loss of generality we may assume that $\{h_n\}$ converges uniformly on $F$ to some non-vanishing function $h$. Fix an arbitrary branch of the logarithm of $h$. Clearly, there exists a choice of the branch of the logarithm of $h_n$ for each $n$ such that the sequence $\{\log h_n\}$ converges uniformly to $\log h$. Moreover, it follows from (\ref{eq:modcontlog}) that $\{\log h_n\}\subset\dc_\omega(F)$. Here, we slightly abuse the notation by using the same symbol $\omega$ despite the fact that the modulus of continuity that majorizes $\omega_{\log h_n}$ is different from the one that majorizes $\omega_{h_n}$. As only the existence of such a majorant is significant to our considerations, this should cause no ambiguity. Thus, $\{\phi_n\}\subset\dc_\omega(\Gamma)$, $\phi_n:=\log h_n \circ \jt$, and it converges to $\phi:=\log h \circ \jt$. As we saw in the previous section, it follows that $\{(\oc\phi_n)^\pm\}$ converge uniformly on $\Gamma$ to $(\oc\phi)^\pm$, respectively. Thus, the claim follows.

Finally, we compute the Szeg\H{o} function of $\hbar(\alpha,x;\cdot)$, $\alpha\in(0,1]$, $x\in F$, defined in (\ref{eq:hbar}). It is a simple observation that
\[
\gm_{\hbar(1,x;\cdot)} = 1 \quad \mbox{and} \quad \szf_{\hbar(1,x;\cdot)}(z) = \frac{2(z-x)}{\map(z)}, \quad z\in D,
\]
by uniqueness of the Szeg\H{o} function. Let us also recall that $\szf_{\hbar(1,x;\cdot)}$ is a non-vanishing holomorphic function in $D$, including at infinity, with zero winding number on any curve there, and  $\szf_{\hbar(1,x;\cdot)}(\infty)=1$. Thus, for any $\alpha\in(0,1]$, an $\alpha$-th power of $\szf_{\hbar(1,x;\cdot)}$ exists. In other words, the function
\[
\szf_\alpha:=\szf_{\hbar(1,x;\cdot)}^\alpha, \quad \szf_\alpha(\infty)= 1,
\]
is a single-valued holomorphic function in $D$ with continuous traces on each side of $F$.

Denote by $\ar_\map$ the branch of the argument of $\map$ which is continuous in $\C\setminus\{(-\infty,y]\cup F_{aux} \cup F\}$ and vanishes on positive reals greater than 1, where $F_{aux}$ was defined in Section \ref{subsec:strong2}. Then $\ar_\map$ has continuous extensions to $F$ that satisfy $\ar_\map^+ = -\ar_\map^-$ on $F$, $\ar_\map^+(1) = 0$, and $\ar_\map^+(-1) = \pi$. It can be easily checked that $\Arg\szf_\alpha = \alpha\left(\ar_x-\ar_\map\right)$, where $\Arg\szf_\alpha$ is the continuous branch of the argument of $\szf_\alpha$ that vanishes at infinity and $\ar_x$ was defined in Section \ref{subsec:strong2}. Thus,
\[
\szf_\alpha^\pm(t) = \frac{|2(t-x)|^\alpha}{|\map^\pm(t)|^\alpha}
\left\{
\begin{array}{ll}
\exp\left\{i\alpha(\ar_x(t)-\ar_\map^\pm(t))\right\}, & t\in F_{x,1}, \\
\exp\left\{i\alpha(\ar_x^\pm(t)-\ar_\map^\pm(t))\right\}, & t\in F_{-1,x}\setminus\{x\},
\end{array}
\right.
\]
and therefore $\szf_\alpha^+\szf_\alpha^- = \hbar(\alpha,x;\cdot)$ on $F$ by (\ref{eq:boundary}) and since $\ar_x^-=\ar_x^+-2\pi$ there. Altogether, we have proved that $\szf_{\hbar(\alpha,x;\cdot)} = \szf_\alpha$.

Now, define $\scf_\alpha := \szf_\alpha^+/\szf_\alpha^-$ on $F$. Then, for $x\neq\pm1$, we have
\begin{eqnarray}
\scf_\alpha(t) &=& |\map^-(t)|^{2\alpha}
\left\{
\begin{array}{ll}
\exp\left\{-i\alpha(\ar_\map^+(t)-\ar_\map^-(t))\right\} \\
\exp\left\{-i\alpha(-\ar_x^+(t)+\ar_x^-(t)+\ar_\map^+(t)-\ar_\map^-(t))\right\}
\end{array}
\right. \nonumber \\
{} &=& |\map^-(t)|^{2\alpha}\left\{
\begin{array}{ll}
\exp\left\{-2i\alpha\ar_\map^+(t)\right\}, & t\in F_{x,1}, \\
\exp\left\{-2i\alpha\ar_\map^+(t)+2i\alpha\pi\right\}, & t\in F_{-1,x}\setminus\{x\}
\end{array}
\right.; \nonumber
\end{eqnarray}
for $x=1$, we have that
\[
\scf_\alpha(t) = |\map^-(t)|^{2\alpha} \exp\left\{-2i\alpha\ar_\map^+(t)+2i\alpha\pi\right\}, \quad t\in F;
\]
and finally, for $x=-1$, we have that
\[
\scf_\alpha(t) = |\map^-(t)|^{2\alpha} \exp\left\{-2i\alpha\ar_\map^+(t)\right\}, \quad t\in F.
\]
Therefore, if $x\neq\pm1$ then $\scf_\alpha$ and $\scf_\alpha^{-1}$ are piecewise continuous functions on $F$, having jump-type singularities at $x$ of magnitudes $2\sin(\alpha\pi)|\map^-(x)/\map^+(x)|^{\alpha}$ and $2\sin(\alpha\pi)|\map^+(x)/\map^-(x)|^{\alpha}$, respectively, and $\scf_\alpha(\pm1)=1$; if $x$ is $\pm1$ then $\scf_\alpha$ is a continuous function on $F$, $\scf_\alpha(\mp1)=1$, and $\scf_\alpha(\pm1)=\exp\{\pm2i\alpha\pi\}$. Thus, (\ref{eq:scatjumps}) does indeed hold.

Now, let $F_0\subset F$ be a finite set of distinct points. Given $\alpha_x<1$ for every $x\in F_0$, let $\hbar$ be associated to $F_0$ via (\ref{eq:VF}). Then
\[
\szf_\hbar = \prod_{x\in F_0\setminus\{\pm1\}} \szf_{\hbar(\alpha_x,x;\cdot)} \prod_{x\in F_0\cap\{\pm1\}} \szf_{\hbar(\alpha_x/2,x;\cdot)}
\]
by the multiplicativity property of Szeg\H{o} functions. Moreover, $\scf_\hbar^+ = \szf_\hbar^+/\szf_\hbar^-$ and $\scf_\hbar^- = \szf_\hbar^-/\szf_\hbar^+$ are the products of $\scf_{\alpha_x}$ and $1/\scf_{\alpha_x}$, respectively, over $x\in F_0$, and therefore they are piecewise continuous function on $F$ with jump-type singularities at $x\in F_0\setminus\{\pm1\}$.

\subsection{Smirnov Classes}
\label{subsec:aux_smirnov}

Let $\Gamma$ be an analytic Jordan curve. Denote by $D^+$ and $D^-$ the bounded and unbounded components of $\overline\C\setminus\Gamma$.  Suppose also that $\Gamma$ is oriented counter-clockwise, i.e. $D^+$ lies on the left when $\Gamma$ is traversed in positive direction. Assume also that $0\in D^+$. Set $L^p$, $p\in[1,\infty)$, to be the space of $p$-summable functions on $\Gamma$ with respect to arclength and denote the corresponding norms by $\|\cdot\|_p$. For each $\phi\in L^1$ the Cauchy integral operator (\ref{eq:oc}) defines a sectionally holomorphic function in $D^+\cup D^-$. To be more specific we shall denote
\[
\oc^\pm\phi = (\oc\phi)_{|D^\pm}.
\]
Then each function $\oc^\pm\phi$ is holomorphic in the corresponding domain and has non-tangential boundary values a.e. on $\Gamma$, which, as before, we denote by $(\oc\phi)^\pm$. For each $p\in[1,\infty)$ we define the {\it Smirnov classes} $E_+^p$ and $E_-^p$ as
\begin{eqnarray}
E_+^p &:=& \left\{\phi\in L^p:~ \oc^-\phi\equiv0\right\}, \nonumber \\
E_-^p &:=& \left\{\phi\in L^p:~ \oc^+\phi\equiv(\oc^+\phi)(0)\right\}. \nonumber
\end{eqnarray}
Clearly, $E_\pm^p$ are closed subspaces of $L^p$. We note also that the Cauchy formula holds, i.e. if $\phi\in E^p_\pm$ then
\[
(\oc\phi)^+ = \phi \quad \mbox{and} \quad (\oc\phi)^- = -\phi + (\oc^+\phi)(0) \quad \mbox{a.e. on} \quad \Gamma
\]
by Sokhotski-Plemelj formula (\ref{eq:pls}). In other words, $\phi$ admits an analytic extension to $D^\pm$ whose $L^p$-means turn out to be uniformly bounded on the level curves of any conformal map from $\D$ onto $D^\pm$.  Conversely, if $h$ is a holomorphic function in $D^\pm$ whose $L^p$-means are uniformly bounded on some sequence of rectifiable Jordan curves in $D^\pm$ whose bounded or unbounded components of the complement exhaust $D^+$ or $D^-$, respectively, then the trace of $h$ belongs to $E_\pm^p$ (see \cite{Duren} and \cite{BottcherKarlovich} for details).

It will be of further convenience to define the projection operators from $L^p$ to the Smirnov classes $E_\pm^p$. To do that, we first define the Cauchy singular operator on $\Gamma$, say $\os$, by the rule
\[
(\os\phi)(t) := \frac{1}{\pi i}\int_\Gamma\frac{\phi(\tau)}{\tau-t}d\tau, \;\;\; t\in\Gamma,
\]
where the integral is understood in the sense of the principal value. We also set $\oi$ to stand for the identity operator on $L^p$ and we put $\dot E_-^p$, $p\in[1,\infty)$, to denote the subspace of $E_-^p$ consisting of functions vanishing at infinity, i.e.
\[
\dot E_-^p := \left\{\phi\in E_-^p:~(\oc^+\phi)(0)=0\right\} = \left\{\phi\in L^p:~\oc^+\phi\equiv0\right\}.
\]
In the considerations below we always assume that $p\in(1,\infty)$. Then
\begin{equation}
\label{eq:directdecomp}
L^p = E_+^p\oplus\dot E_-^p.
\end{equation}
We define complementary projections on $L^p$ in the following manner:
\begin{equation}
\label{eq:opoq}
\begin{array}{ll}
\op:~L^p \to E_+^p,      & \phi \mapsto (1/2)(\oi+\os)\phi,  \\
\oq:~L^p \to \dot E_-^p, & \phi \mapsto (1/2)(\oi-\os)\phi.
\end{array}
\end{equation}
These operators are bounded and can be used to rewrite the Sokhotski-Plemelj formula as
\begin{equation}
\label{eq:spl}
\op\phi = (\oc\phi)^+ \;\; \mbox{and} \;\; \oq\phi=-(\oc\phi)^-, \;\; \phi\in L^p.
\end{equation}

Finally, we characterize the weak convergence in $E_+^p$. According to the Riesz representation theorem, any linear functional $\ell\in(L^p)^*$ has a unique representation of the form
\begin{equation}
\label{eq:functionals}
\ell\phi = \int_\Gamma \phi f|d\tau|, \quad \phi\in E_+^p, \quad f\in L^q, \quad \frac1p+\frac1q=1.
\end{equation}
It follows now from the Hanh-Banach theorem that
\begin{equation}
\label{eq:linearfunctional1}
(E_+^p)^* \cong L^q/(E_+^p)^\perp,
\end{equation}
where the symbol ``$\cong$'' stands for ``isometrically isomorphic'' and $(E_+^p)^\perp$ is the annihilator of $E_+^p$ under the pairing (\ref{eq:functionals}). It can be easily checked that
\begin{equation}
\label{eq:linearfunctional2}
(E_+^p)^\perp = \left\{\psi\frac{d\tau}{|d\tau|}:~ \psi\in E_+^q\right\}.
\end{equation}
Since $\Gamma$ is analytic, multiplication by $d\tau/|d\tau|$ is an isometric isomorphism of $L^q$ into itself. Thus, by (\ref{eq:directdecomp}) and (\ref{eq:opoq}) every $f\in L^q$ can be uniquely decomposed as
\begin{equation}
\label{eq:linearfunctional3}
f = \op\left(f\frac{\overline{d\tau}}{|d\tau|}\right)\frac{d\tau}{|d\tau|} + \oq\left(f\frac{\overline{d\tau}}{|d\tau|}\right)\frac{d\tau}{|d\tau|}.
\end{equation}
Combining (\ref{eq:linearfunctional1}), (\ref{eq:linearfunctional2}), (\ref{eq:linearfunctional3}), and (\ref{eq:directdecomp}), we see that for any linear functional $\ell\in(E_+^p)^*$ there uniquely exists $\psi\in \dot E_-^q$ such that
\[
\ell\phi = \int_\Gamma \phi\psi d\tau, \;\;\; \phi\in E_+^p.
\]
Since $(\cdot-z)^{-1}\in \dot E_-^q$ for every $z\in D^+$, it follows from the Cauchy formula that a bounded sequence $\{\phi_n\}\subset E^p_+$ weakly converges to zero if and only if $\{\oc^+\phi_n\}$ converges to zero locally uniformly in $D^+$.

\subsection{Toeplitz and Hankel Operators}
\label{subsec:aux_th}

In the notation of the previous section, let $c\in L^\infty$, i.e. $c$ be a bounded function on $\Gamma$ with the norm $\|c\|_\infty$, and let $p\in(1,\infty)$. We define the {\it Toeplitz} and {\it Hankel} operators with symbol $c$ by the rule
\[
\begin{array}{ll}
\ot_c:~ E_+^p \to E_+^p,      & \phi \mapsto \op(c\phi) \nonumber, \\
\oh_c:~ E_+^p \to \dot E_-^p, & \phi \mapsto \oq(c\phi) \nonumber,
\end{array}
\]
respectively. Then $\ot_c$ and $\oh_c$ are bounded operators and $\oh_c$ is compact whenever $c\in C(\Gamma)$ (see, for instance, \cite[Prop. 6.21]{BottcherKarlovich}).

Let $\{\phi_n\}\subset E_+^p$ be a bounded sequence that weakly converges to zero and $\{c_n\}$ be a relatively compact family in $L^\infty$. Then for any limit point of $\{c_n\}$, say $c$, we have
\[
\|\ot_c\phi_n-\ot_{c_n}\phi_n\|_p = \|\op((c-c_n)\phi_n)\|_p \leq \|\op\|\|c-c_n\|_\infty\
\|\phi_n\|_p,
\]
where $\|\op\|$ is the operator norm of $\op$. Therefore, since $\op$ and $\ot_{c}$ are bounded operators, it can be readily verified that $\{\ot_{c_n}\phi_n\}$ weakly converges to zero. Moreover, using compactness of Hankel operators with continuous symbols, a similar argument shows that $\{\oh_{c_n}\phi_n\}$ strongly converges to zero in the $\|\cdot\|_p$-norm if $\{c_n\}$ is a relatively compact family in $C(\Gamma)$.

Let now $c$ be a continuous function on $\Gamma$ except for a finite set of points, say $\Gamma_0$. Assume further that $c$ possesses finite one-sided limits, say $c(\zeta^+)$ and $c(\zeta^-)$, at each $\zeta\in\Gamma_0$. Then there exists $c^\prime\in C(\Gamma)$ such that
\[
\|c-c^\prime\|_\infty = \frac12\max_{\zeta\in\Gamma_0}j_c(\zeta), \quad j_c(\zeta):=|c(\zeta^+)-c(\zeta^-)|.
\]
Thus, if $\{\phi_n\}\subset E_+^p$, $\|\phi_n\|=1$, is weakly convergent to zero, it holds that
\begin{eqnarray}
\limsup_{n\to\infty} \|\oh_c\phi_n\|_p &\leq& \limsup_{n\to\infty} \left(\|\oh_{c-c^\prime}\phi_n\|_p+\|\oh_{c^\prime}\phi_n\|_p\right) \nonumber \\
{} &\leq& \limsup_{n\to\infty} \|\oq((c-c^\prime)\phi_n)\|_p \leq \frac12\|\oq\|\max_{\zeta\in\Gamma_0}j_c(\zeta) \nonumber
\end{eqnarray}
since $\oh_{c^\prime}$ is a compact operator.

\subsection{On Conjugate Functions}
\label{subsec:aux_cf}

Let $\Gamma$ be an analytic Jordan curve and $D^+$ and $D^-$ stand, as before, for the bounded and unbounded components of the complement of $\Gamma$. Denote by $f$ a conformal map of the unit disk, $\D$, onto $D^+$. Further, let $\{\phi_n\}$ be a sequence of holomorphic functions in $D^+$ that continuously extend to $\Gamma$. Assume also that $\{|\phi_n|\}$ is a relatively compact family in $\dc_\omega^*(\Gamma)$. Define $\psi_n := \phi_n\circ f$. Then $\{\psi_n\}$ is a sequence of bounded holomorphic functions in $\D$ with continuous boundary values on the unit circle, $\T$. Moreover, since $f^\prime$ extends continuously on $\T$ by \cite[Thm. 3.3.5]{Pommerenke}, it holds that $\{|\psi_n|\}$ is a relatively compact family in $\dc_\omega^*(\T)$ (as before, we use the same symbol $\omega$ since only the existence of the majorant is important).

By the canonical factorization of Hardy functions in $\D$ (see \cite[Sec. 2.4]{Duren}), we get $\psi_n = \psi_n^i\psi_n^o$, where $|\psi_n^i|\equiv1$ on $\T$ and
\[
\psi_n^o(z) := \exp\left\{\int_\T\frac{\tau+z}{\tau-z}\log|\psi_n(\tau)|\frac{|d\tau|}{2\pi}\right\}, \quad z\in\D.
\]
In other words, $\psi_n^i$ and $\psi_n^o$ are the inner and outer factors of $\psi_n$, respectively. Since $|\psi_n^o|\equiv|\psi_n|$ on $\T$, $\{|\psi_n^o|\}$ is a relatively compact family in $\dc_\omega^*(\T)$. Moreover, since $|\psi^o_n|$ is uniformly bounded below on $\T$, we have that
\[
\omega_{\log|\psi^o_n|}(\delta) \leq \omega_{|\psi^o_n|}(\delta)/\min_{\tau\in\T}|\psi^o_n(\tau)| \leq k_1\omega(\delta)
\]
for all $\delta$ small enough, where $k_1$ is a positive constant independent of $n$. Thus, without loss of generality we may assume that $\{\log|\psi_n^o|\}\in\dc_\omega(\T)$. Denote by $\arg\psi_n^o$ the harmonic conjugate of $\log|\psi_n^o|$ normalized to be zero at zero. Then by \cite[Thm. III.1.3]{Garnett} we have that
\[
\omega_{\arg\psi_n^o}(\delta) \leq k_2 \left(\int_0^\delta\frac{\omega(t)}{t}dt + \delta\int_\delta^2\frac{\omega(t)}{t^2}dt\right),
\]
where $k_2$ is an absolute constant and the moduli of continuity of $\arg\psi_n^o$ are computed on $\T$. This, in particular, implies that $\{\arg\psi_n^o\}$ is a uniformly equicontinuous family of uniformly bounded functions on $\T$. Hence, by Ascoli-Arzel\`a theorem, $\{\arg\psi_n^o\}$ is relatively compact in $C(\T)$ and therefore so is $\{\psi_n^o\}$.

Summarizing the above we arrive at the following. If $\Gamma$ is an analytic Jordan curve and $\{\phi_n\}$ is a sequence of holomorphic functions in $D^+$ with continuous traces on $\Gamma$ such that $\{|\phi_n|\}$ forms a relatively compact family in $\dc_\omega^*(\Gamma)$, then each $\phi_n$ can be written as
\[
\phi_n = \phi_n^i\phi_n^o, \quad |\phi_n|\equiv|\phi_n^o| \quad \mbox{on} \quad \Gamma,
\]
and the sequence $\{\phi_n^o\}$ forms a relatively compact family in $C(\Gamma)$.

\section{Proofs}
\label{sec:proofs}

In the first part of this section we prove Theorems \ref{thm:sa1}, \ref{thm:sa2}, and \ref{thm:pade}. A number of places in these proofs contain references to the material of the previous section. The second part is devoted to the proof of Theorem \ref{thm:sp} and is independent of the rest of the paper.

\subsection{Proofs of Theorems \ref{thm:sa1}, \ref{thm:sa2}, and \ref{thm:pade}}
\label{subsec:proofs1}

\begin{proof}[Proof of Theorem \ref{thm:sa1}] The starting point for the proof of this theorem is the method of singular integral equation proposed by J. Nuttall in \cite{Nut90}.

By the very definition of $R_n$, (\ref{eq:wSokhotski}), (\ref{eq:boundary}), and (\ref{eq:szegodecomp}), it follows that
\begin{equation}
\label{eq:sokhotski}
(R_nw)^+ + (R_nw)^- = 2q_nw_n = 2\gm_{w_n}q_n~\frac{\szf_{w_n}^+\szf_{w_n}^-}{(\map^+)^n(\map^-)^n} \;\; \mbox{on} \;\; F.
\end{equation}
The last equation can be rewritten as
\begin{equation}
\label{eq:sokhotski1}
\left(\frac{R_nw\map^n}{\szf_{w_n}}\right)^\pm + \left(\frac{R_nw\map^n}{\szf_{w_n}}\right)^\mp \left(\frac{\map^n}{\szf_{w_n}}\right)^\pm \left(\frac{\szf_{w_n}}{\map^n}\right)^\mp = 2\gm_{w_n}\left(\frac{q_n\szf_{w_n}}{\map^n}\right)^\mp
\end{equation}
on $F$. Observe that
\begin{equation}
\label{eq:an}
a_n := \frac{R_nw\map^n}{\szf_{w_n}}
\end{equation}
is a holomorphic function in $D$ and $a_n^\pm\in\cs$, $a_n^-(\pm1)=a_n^+(\pm1)$. Indeed, $S_{w_n}$ is a non-vanishing and holomorphic in $D$, 
\[
w(z)\map^n(z)=O(z^{n+1}) \;\; \mbox{as} \;\; z\to\infty
\]
by the very definition of these functions, and
\[
R_n(z)=O(1/z^{n+1}) \;\; \mbox{as} \;\; z\to\infty
\]
due to orthogonality relations (\ref{eq:orthogonality}). The continuity of the boundary values $a_n^\pm$ and equality of $a_n^\pm$ at $\pm1$ follow from (\ref{eq:wSokhotski}) and (\ref{eq:endpoints}). Let also
\begin{equation}
\label{eq:bn}
b_n := 2\gm_{w_n} \frac{q_n\szf_{w_n}}{\map^n}.
\end{equation}
It is again immediate that $b_n$ is a holomorphic function in $D$, $b_n^\pm\in\cs$, and $b_n^+(\pm1)=b_n^-(\pm1)$. Then using the multiplicativity property of Szeg\H{o} functions, we can rewrite (\ref{eq:sokhotski1}) as
\begin{equation}
\label{eq:sokhotski2}
a_n^\pm + (a_nc_n)^\mp\frac{(\map^n\szf_{v_n})^\pm}{(\map^n\szf_{v_n})^\mp} = b_n^\mp \;\; \mbox{on} \;\; F,
\end{equation}
where
\begin{equation}
\label{eq:cn}
c_n^\mp:= \szf_{h_n}^\mp/\szf_{h_n}^\pm \quad \mbox{on} \quad F.
\end{equation}
Clearly, $c_n^\pm\in\cs$, $c_n^+c_n^- = 1$, and $c_n^+(\pm1)=c_n^-(\pm1)=1$. Moreover, $\{c_n^\pm\}$ are relatively compact families in $\cs$ (see Section \ref{subsec:aux_szf}, the discussion right after (\ref{eq:szegopoly})). Further, by (\ref{eq:szegodecomp}), (\ref{eq:boundary}), and (\ref{eq:szegopoly}) it holds that
\begin{equation}
\label{eq:rnmp}
\frac{(\map^n\szf_{v_n})^\pm}{(\map^n\szf_{v_n})^\mp} = \frac{1}{\gm_{v_n}}\frac{v_n}{(\map^{2n}\szf_{v_n}^2)^\mp} = r_n^\mp \;\; \mbox{on} \;\; F.
\end{equation}
Thus, we obtain from (\ref{eq:sokhotski2}) that
\begin{equation}
\label{eq:sokhotski3}
b_n^\mp = a_n^\pm + (a_nr_nc_n)^\mp \;\; \mbox{on} \;\; F.
\end{equation}

Let $\jt$ be the usual Joukovski transformation. Then $\Gamma:=\jt^{-1}(F)$ is an analytic curve (see Section \ref{subsec:aux_jouk}). Denote by $D^+$ and $D^-$ the bounded and unbounded components of the complement of $\Gamma$, respectively (we orient $\Gamma$ counter-clockwise), and define
\[
\hat f := f\circ\jt_{|D^+}, \;\;\; f=a_n,b_n,r_n. 
\]
Then $\hat a_n$, $\hat b_n$, $\hat r_n$ are holomorphic functions in $D^+$ with continuous traces
\begin{equation}
\label{eq:translate}
{\hat f}_{|\Gamma^\pm} = f^\mp\circ\jt, \;\;\; f=a_n, b_n, r_n, \;\;\; \Gamma^\pm := \map^\pm(F).
\end{equation}
Continuity of the boundary values immediately implies that these functions are bounded in $\overline{D^+}$ and therefore
\begin{equation}
\label{eq:hatfunctions}
\hat f\in E_+^p \;\; \mbox{and} \;\; \hat f(1/\cdot)\in E_-^p, \;\;\;  f=a_n,b_n,r_n,
\end{equation}
for any $p\in[1,\infty)$ by the characterization of Smirnov classes (Section \ref{subsec:aux_smirnov}). Moreover, the sequence $\{\hat r_n\}$ converges to zero locally uniformly in $D^+$ and therefore weakly goes to zero in $E_+^p$ (see Section \ref{subsec:aux_smirnov}) by the very definition of $\hat r_n$ and the fact that $F$ is symmetric with respect to $\E$. It follows from (\ref{eq:sokhotski3}) that
\begin{equation}
\label{eq:bvp}
\hat b_n(\tau) = \hat a_n(1/\tau) + (\hat a_n \hat r_n \hat c_n)(\tau), \;\;\; \tau\in\Gamma,
\end{equation}
where $\hat c_{n|\Gamma^\pm} := c_n^\mp\circ\jt$. Notice that $\hat c_n$ are, in fact, continuous functions on $\Gamma$ and $\{\hat c_n\}$ is a relatively compact family in $C(\Gamma)$. Fix an arbitrary $p\in(1,\infty)$ and denote
\begin{equation}
\label{eq:renormalizedan}
\hat a_{n,p} = \hat a_n/\|\hat a_n\|_p \; \mbox{on} \; D^+ \quad \mbox{and} \quad d_{n,p}(\cdot) := \hat a_{n,p}(0) - \hat a_{n,p}(1/\cdot) \; \mbox{on} \; D^-,
\end{equation}
where $\|\cdot\|_p$ is the $L^p$-norm on $\Gamma$. Clearly, $d_{n,p}\in\dot E_-^p$. Applying subsequently the operators $\op$ and $\oq$ (see (\ref{eq:opoq})) to both sides of (\ref{eq:bvp}), we get in the view of (\ref{eq:hatfunctions}) that
\begin{equation}
\label{eq:bvp1}
\left\{
\begin{array}{l}
\hat b_n/\|\hat a_n\|_p = \hat a_{n,p}(0) + \op(\hat a_{n,p} \hat r_n \hat c_n) \\
d_{n,p} = \oq(\hat a_{n,p} \hat r_n \hat c_n)
\end{array}
\right..
\end{equation}
Equivalently, we can rewrite (\ref{eq:bvp1}) as
\begin{equation}
\label{eq:bvp2}
\left\{
\begin{array}{l}
\hat b_n/\|\hat a_n\|_p = \hat a_{n,p}(0) - \ot_n(\hat a_{n,p} \hat r_n) \\
d_{n,p} = \oh_n(\hat a_{n,p} \hat r_n)
\end{array}
\right.,
\end{equation}
where $\ot_n$ and $\oh_n$ are the Toeplitz and Hankel operators with symbol $\hat c_n$, respectively (see Section \ref{subsec:aux_th}). By the normalization of $\{\hat a_{n,p}\}$ and the properties of $\{\hat r_n\}$, the norms $\|\hat a_{n,p}\hat r_n\|_p$ are uniformly bounded with $n$ and the sequence $\{\hat a_{n,p}\hat r_n\}$ weakly goes to zero. Thus,
\[
\|d_{n,p}\|_p \to 0 \;\; \mbox{as} \;\; n\to\infty
\]
by the compactness properties of Hankel operators with continuous symbols (see discussion in Section \ref{subsec:aux_th}). In particular, since
\[
\|d_{n,p}(1/\cdot)\|_p \leq \frac{\|d_{n,p}\|_p}{\min_{\tau\in\Gamma}|\tau|^{2/p}} \to 0 \quad \mbox{as} \quad n\to\infty
\]
(note that $\Gamma$ cannot pass through the origin since otherwise it would be unbounded as $\tau$ and $1/\tau$ belong to $\Gamma$ simultaneously), we have from (\ref{eq:renormalizedan}) that
\begin{equation}
\label{eq:anpat0}
\left\{
\begin{array}{lll}
|\hat a_n(0)|/\|\hat a_n\|_p &\to& 1 \\
\|1-\hat a_n/\hat a_n(0)\|_p &\to& 0
\end{array}
\right.
\quad \mbox{as} \quad n\to\infty.
\end{equation}
Using now properties of Toeplitz operators with $L^\infty$ symbols (see Section \ref{subsec:aux_th}), we derive from (\ref{eq:bvp2}), (\ref{eq:anpat0}), and the Cauchy formula that
\[
\left\{
\begin{array}{l}
\hat b_n/\hat a_n(0) = 1+o(1) \\
\hat a_n/\hat a_n(0) = 1+o(1)
\end{array}
\right.
\]
locally uniformly in $D^+$ or equivalently
\begin{equation}
\label{eq:theend}
\left\{
\begin{array}{lll}
\displaystyle \frac{2\gm_{w_n}}{\hat a_n(0)} \frac{q_n\szf_{w_n}}{\map^n} &=& 1+o(1) \\
\displaystyle \frac{1}{\hat a_n(0)} \frac{R_nw\map^n}{\szf_{w_n}} &=& 1+o(1)
\end{array}
\right.
\end{equation}
locally uniformly in $D$. The first asymptotic formula in (\ref{eq:theend}) immediately implies that $q_n$ has exact degree $n$ for all $n$ large enough and therefore can be normalized to be monic. Under such a normalization we deduce again from the first formula in (\ref{eq:theend}) by considering the point at infinity and using the normalization $\szf_{w_n}(\infty)=1$ that
\begin{equation}
\label{eq:an0}
\hat a_n(0) = [1+o(1)]2^{1-n}\gm_{w_n} = [1+o(1)] 2^n\gamma_n
\end{equation}
and (\ref{eq:sa1}) follows.

Now, let
\begin{equation}
\label{eq:dn}
d_n := \frac{R_nw}{\gamma_n\szf_n} -1 = \frac{a_n}{2^n\gamma_n}-1.
\end{equation}
Then $\hat d_n := d_n\circ\jt_{|D^+}$ is such that
\begin{equation}
\label{eq:dnhatto0}
\|\hat d_n\|_p \leq \|1-\hat a_n/\hat a_n(0)\|_p + o(1)\|\hat a_n\|_p/|\hat a_n(0)| \to 0 \;\; \mbox{as} \;\; n\to\infty
\end{equation}
for any $p\in(1,\infty)$ by (\ref{eq:an0}) and (\ref{eq:anpat0}), and therefore also for $p=1$ by H\"older's inequality. So, we have for any $p\in[1,\infty)$ that
\begin{eqnarray}
\|\hat d_n\|_p^p &=& \int_\Gamma |\hat d_n(\tau)|^p|d\tau| \nonumber \\
{} &=& \int_F\left(|d_n^-(t)|^p|(\map^+(t))^\prime| + |d_n^+(t)|^p|(\map^-(t))^\prime|\right)|dt| \nonumber \\
\label{eq:dnto0}
{} &=& \int_F\left(|d_n^-(t)|^p|\map^+(t)|+|d_n^+(t)|^p|\map^-(t)|\right) \left|\frac{dt}{w^+(t)}\right| \to 0
\end{eqnarray}
as $n\to\infty$. Since $|\map^\pm|\geq\min_\Gamma|\tau|$ and $|w^+(t)|=\sqrt{|1-t^2|}$, the second part of (\ref{eq:sa2}) follows from (\ref{eq:dn}) and (\ref{eq:dnto0}). Moreover, since we can rewrite (\ref{eq:sokhotski}) as
\[
q_n ~\gamma_n\szf_n^+\szf_n^- = (R_nw)^+ + (R_nw)^- = \gamma_n\left[((1+d_n)\szf_n)^+ + ((1+d_n)\szf_n)^-\right]
\]
by (\ref{eq:dn}), we get the first part of (\ref{eq:sa2}).

So, it remains only to prove (\ref{eq:sa3}). By Lavrentiev's theorem, which is the analog of Weierstrass theorem for complex-valued functions on an arc, it is enough to show that
\begin{equation}
\label{eq:mergelyan}
\int_F t^j \gamma_n^{-1}q_n^2(t)w_n(t)\frac{dt}{w^+(t)} \to \int_F t^j \frac{dt}{w^+(t)}, \;\;\; j=0,1,\ldots ~.
\end{equation}
By (\ref{eq:sokhotski}), we get
\begin{eqnarray}
\gamma_n^{-1}q_n^2w_n &=& \frac{1}{4\gamma_nw_n}\left[ \left((R_nw)^+\right)^2 + 2(R_nw)^+(R_nw)^- + \left((R_nw)^-\right)^2 \right] \nonumber \\
{} &=& \gamma_n^{-2}\left[ \frac12\frac{\left((R_nw)^+\right)^2}{\szf_n^+\szf_n^-} + \frac{(R_nw)^+(R_nw)^-}{\szf_n^+\szf_n^-} + \frac12\frac{\left((R_nw)^-\right)^2}{\szf_n^+\szf_n^-} \right]. \nonumber
\end{eqnarray}
Recalling the definitions of $a_n$, $r_n$, $c_n$, and $d_n$, we get in view of (\ref{eq:rnmp}) that
\begin{eqnarray}
\gamma_n^{-1}q_n^2w_n &=& 2^{-2n}\gamma_n^{-2}\left[(a_n^2r_nc_n)^+/2 + a_n^+a_n^- + (a_n^2r_nc_n)^-/2 \right] \nonumber \\
{} &=& 1 + \left[d_n^- + d_n^+ + d_n^-d_n^+\right] \nonumber \\
{} && + \left[((1+d_n)^2r_nc_n)^+ + ((1+d_n)^2r_nc_n)^-\right]/2. \nonumber
\end{eqnarray}
Thus,
\[
\int_F t^j \frac{q_n^2(t)w_n(t)}{\gamma_nw^+(t)}dt = \int_Ft^j\frac{dt}{w^+(t)} + I_{1,n} - \pi i I_{2,n},
\]
where
\[
I_{1,n} := \int_F \left[d_n^-(t) + d_n^+(t) + d_n^-(t)d_n^+(t)\right] \frac{t^jdt}{w^+(t)}
\]
and
\[
I_{2,n} := -\frac{1}{2\pi i}\int_F \left[(1+d_n^+(t))^2(r_n^+c_n^+)(t) + (1+d_n^-(t))^2(r_n^-c_n^-)(t)\right]\frac{t^jdt}{w^+(t)}.
\]
Equations (\ref{eq:mergelyan}) shall follow upon showing that $I_{1,n}$ and $I_{2,n}$ converge to zero as $n$ grows large. Clearly, this holds for $I_{1,n}$ by (\ref{eq:dnto0}) since $|\map^\pm|$ are bounded on $F$. Further, substituting $t=\jt(\tau)$, $\tau\in\Gamma$, we get as in (\ref{eq:translation}) that
\[
\int_F (1+d_n^\pm(t))^2(r_nc_n)^\pm(t)\frac{t^jdt}{w^+(t)} = -\int_{\Gamma^\mp} (1+\hat d_n(\tau))^2(\hat r_n\hat c_n)(\tau) \jt^j(\tau) \frac{d\tau}{\tau},
\]
where we integrate on $\Gamma^-$ from $-1$ to 1 and on $\Gamma^+$ from 1 to $-1$. Thus, by the very definition of the operators $\oc$ and $\op$ (see (\ref{eq:oc}) and (\ref{eq:opoq})), it holds that
\begin{eqnarray}
I_{2,n} &=& \left(\oc^+((1+\hat d_n)^2\hat r_n\hat c_n\jt^j)\right)(0) = \left(\oc^+\op((1+\hat d_n)^2\hat r_n\hat c_n\jt^j)\right)(0) \nonumber \\
{} &=& \left(\oc^+\ot_{j,n}((1+\hat d_n)^2\hat r_n)\right)(0), \nonumber
\end{eqnarray}
where $\ot_{j,n}$ is the Toeplitz operator with symbol $\hat c_n\jt^j$. Since $\|1+\hat d_n\|_p^2 = 1+o(1)$ for any $p\in(1,\infty)$ by (\ref{eq:dnhatto0}) and since $|\hat r_n|$ are uniformly bounded above on $\Gamma$ while $\{\hat r_n\}$ converges to zero locally uniformly in $D^+$ by the assumptions of the theorem, the sequence $\{(1+\hat d_n)^2\hat r_n\}$ weakly goes to zero in each $L^p$, $p\in(1,\infty)$. Furthermore, $\{\hat c_n\jt^j\}$ is a relatively compact family in $C(\Gamma)$ for every $j=0,1,\ldots$. Therefore $\ot_{j,n}((1+\hat d_n)^2\hat r_n)$ converges weakly to zero in any $L^p$, $p\in(1,\infty)$, and subsequently $I_{2,n}$ tend to zero as $n$ grows large (see Section \ref{subsec:aux_th}). This finishes the proof of the theorem.
\end{proof}
\begin{proof}[Proof of Theorem \ref{thm:sa2}] The following is a modification of the proof of Theorem \ref{thm:sa1}. Thus, we shall keep all the notation we used in that proof.

Set $\hat c_{|\Gamma^\pm}:=c^\mp_\hbar\circ\jt$, where $c^\pm_\hbar$ were defined after (\ref{eq:VF}). Repeating the steps leading to (\ref{eq:bvp}), we get that
\begin{equation}
\label{eq:plemelj}
\hat b_n(\tau) = \hat a_n(1/\tau) + (\hat a_n\hat r_n\hat c_n\hat c)(\tau), \quad  \tau\in\Gamma.
\end{equation}
As before, $\hat b_n$ and $\hat r_n$ are holomorphic functions in $D^+$ with continuous traces on $\Gamma$, $\{\hat r_n\}$ converges to zero locally uniformly in $D^+$, and the boundary values of $|\hat r_n|$ are uniformly bounded above on $\Gamma$. Further, $\{\hat c_n\}$ is a relatively compact family in $C(\Gamma)$ and $\hat a_n$ are holomorphic functions in $D^+$. However, the traces of $\hat a_n$ are continuous only on $\Gamma\setminus\Gamma_0$, $\Gamma_0=\jt^{-1}(F_0)$. Moreover, at each $\zeta\in\Gamma_0$, each $\hat a_n$ has an algebraic singularity of order $-\alpha_x > -1/2$, $x=\jt(\zeta)$. Thus, by the characterization of Smirnov classes (Section \ref{subsec:aux_smirnov}), we have that $a_n\in E_+^2$. Finally, the  function $\hat c$ is piecewise continuous on $\Gamma$, i.e. $\hat c$ is continuous on $\Gamma\setminus\Gamma_0$ with jump-type discontinuities at each $\zeta\in\Gamma_0$.

Given (\ref{eq:plemelj}), we obtain as in (\ref{eq:bvp1}) that
\begin{equation}
\label{eq:newbvp}
\left\{
\begin{array}{l}
\hat b_n/\|\hat a_n\|_2 = \hat a_{n,2}(0) + \op(\hat a_{n,2} \hat r_n \hat c_n \hat c) \\
d_{n,2} = \oq(\hat a_{n,2} \hat r_n \hat c_n \hat c)
\end{array}
\right..
\end{equation}
By assumptions on $\E$, it holds that $\{|\hat r_n|\}$ is a relatively compact family in $\dc^*_\omega(\Gamma)$. Thus, as explained in Section \ref{subsec:aux_cf}, we have that $\hat r_n = \hat r_n^i \hat r_n^o$, where $\{\hat r_n^o\}$ is a normal family in $D^+$ with continuous boundary values on $\Gamma$ that form a relatively compact family in $C(\Gamma)$. Moreover, $|\hat r_n|=|\hat r_n^o|$ on $\Gamma$ and since neither of the limit points of $\{\hat r_n^o\}$ can vanish in $D^+$ by the outer character of $\hat r_n^o$, it follows from the assumptions on $r_n$ that $\{\hat r_n^i\}$ converges to zero locally uniformly in $D^+$   and $|\hat r_n^i|\equiv1$ on $\Gamma$.

Now, we can write (\ref{eq:newbvp}) as
\begin{equation}
\label{eq:newbvp1}
\left\{
\begin{array}{l}
\hat b_n/\|\hat a_n\|_2 = \hat a_{n,2}(0) - \ot_n(\hat a_{n,2} \hat r_n) \\
d_{n,2} = \oh_n(\hat a_{n,2} \hat r_n^i)
\end{array}
\right.,
\end{equation}
where $\ot_n$ is the Toeplitz operator with symbol $\hat c_n\hat c$ and $\oh_n$ is the Hankel operator with symbol $\hat c_n\hat r_n^o\hat c$. Then we get from the discussion at the end of Section \ref{subsec:aux_th} that
\[
\limsup \|d_{n,2}\|_2 \leq \frac12\|\oq\| \max_{\zeta\in\Gamma_0} \limsup_{n\to\infty} \left|(\hat c_n\hat r_n^o\hat c)(\zeta^+) - (\hat c_n\hat r_n^o\hat c)(\zeta^-)\right|
\]
since $\{\hat c_n\hat r_n^o\}$ is a relatively compact family in $C(\Gamma)$, $\|\hat a_{n,2}\hat r_n^i\|_2=1$, and $\{\hat a_{n,2}\hat r_n^i\}$ weakly converges to zero in $E_+^2$. Then it immediately follows from the very definition of $\hat c_n$, $\hat c$, $\hat r_n$, and (\ref{eq:upsilon}) that
\[
\limsup_{n\to\infty}\|d_{n,2}(1/\cdot)\|_2 < 1
\]
and since $d_{n,2}(1/\cdot)=(\hat a_n(0)-\hat a_n(\cdot))/\|\hat a_n\|_2$, we derive that
\begin{equation}
\label{eq:anfinite}
\limsup_{n\to\infty} \|\hat a_n\|_2/|\hat a_n(0)| < \infty.
\end{equation}
As $\ot_n$ and $\oh_n$ are bounded operators acting on weakly convergent sequences, we get from (\ref{eq:newbvp1}) and (\ref{eq:anfinite}) that
\begin{equation}
\label{eq:againtheend}
\left\{
\begin{array}{l}
\hat b_n/\hat a_n(0) = 1+o(1) \\
\hat a_n/\hat a_n(0) = 1+o(1)
\end{array}
\right.
\end{equation}
locally uniformly in $D^+$. As in the proof of Theorem \ref{thm:sa1}, (\ref{eq:againtheend}) implies (\ref{eq:theend}) and (\ref{eq:an0}) and therefore (\ref{eq:sa1}) follows under the present assumptions.

Define $d_n$ as in (\ref{eq:dn}). Then it still holds that
\[
\hat d_n = d_n\circ\jt_{|D^+} = [1 + o(1)]\frac{\hat a_n}{\hat a_n(0)} - 1
\]
and hence $\limsup_{n\to\infty}\|\hat d_n\|_2 < \infty$ by (\ref{eq:anfinite}). Thus, as in (\ref{eq:dnto0}), we get
\[
\limsup_{n\to\infty} \int_F\left(|d_n^-(t)|^2|\map^+(t)|+|d_n^+(t)|^2|\map^-(t)|\right) \left|\frac{dt}{w^+(t)}\right| < \infty.
\]
The rest of the conclusions of the theorem now follow as in the proof of Theorem \nolinebreak \ref{thm:sa1} from the above estimate.
\end{proof}
\begin{proof}[Proof of Theorem \ref{thm:pade}] The existence of schemes that make $F$ symmetric with respect to them was shown in Theorem \ref{thm:sp}. It is also well-known and follows easily from the defining properties of Pad\'e approximants, that the denominators of $\Pi_n$ satisfy non-Hermitian orthogonality relation of the form
\[
\int t^jq_n(t)\frac{d\mu(t)}{v_n(t)} = 0, \;\;\; j=0,\ldots,n-1,
\]
and the error of approximation is given by
\[
(f_\mu-\Pi_n)(z) = \frac{v_n(z)}{q_n^2(z)} \int \frac{q_n^2(t)}{v_n(t)}\frac{d\mu(t)}{z-t}, \quad z\in\overline\C\setminus\supp(\mu).
\]
Given (\ref{eq:measuremu}), we see that the asymptotic behavior of $q_n$ is governed by (\ref{eq:sa1}). Thus, using the orthogonality of $q_n$, it is not difficult to see that we can write
\[
f_\mu-\Pi_n = \frac{v_nR_n}{q_n} \quad \mbox{in} \quad D=\overline\C\setminus\supp(\mu),
\]
where $R_n$ is the function of the second kind associated to $q_n$ via (\ref{eq:secondkind}). Hence, equations (\ref{eq:sa1}) imply that
\begin{eqnarray}
f_\mu-\Pi_n &=& (1+o(1))~\frac{\gamma_nv_n\szf_n^2}{w} = \frac{2\gm_{\dot\mu}+o(1)}{w}~\frac{\szf_{\dot\mu}^2~v_n}{\gm_{v_n}\map^{2n}\szf_{v_n}^2} \nonumber \\
{} &=& \frac{2\gm_{\dot\mu}+o(1)}{w}~\szf_{\dot\mu}^2~r_n \nonumber
\end{eqnarray}
locally uniformly in $D$, where we used (\ref{eq:szegopoly}).
\end{proof}

\subsection{Proof of Theorem \ref{thm:sp}}
\label{subsec:proofs2}

Before we proceed with the main part of the proof, we claim that our assumptions on $F$ yield the existence of the partial derivatives $\partial U^{\lambda-\nu}/\partial n^\pm$ almost everywhere on $F$, where $\lambda$ is the weighted equilibrium distribution on $F$ in the field $-U^\nu$ and $\nu$ is a compactly supported probability Borel measure in $D=\overline\C\setminus F$.

Let $\psi$ be the conformal map of $D$ into the unit disk, $\D$, such that $\psi(\infty)=0$ and $\psi^\prime(\infty)>0$. Put
\begin{equation}
\label{eq:generalBlaschke}
b(e;z) := \frac{\psi(z)-\psi(e)}{1-\overline{\psi(e)}\psi(z)}, \quad e,z\in D.
\end{equation}
Then $b(e;\cdot)$ is a holomorphic function in $D$ with a simple zero at $e$ and unimodular boundary values on $F$. Hence, the \emph{Green function for $D$ with pole at $e\in D$} \cite[Sec. II.4]{SaffTotik} is given by $-\log|b(e;\cdot)|$ and the \emph{Green potential} of a Borel measure $\nu$, $\supp(\nu)\subset D$, not necessarily compact, is given by
\begin{equation}
\label{eq:greenpotential}
 U_D^\nu(z) := -\int\log|b(t;z)|d\nu(t), \quad z\in D.
\end{equation}
Furthermore, since $D$ is simply connected and therefore regular with respect to the Dirichlet problem, $U_D^\nu$ extends continuously to $F$ and is identically zero there.

Assume now that $\nu$ has compact support. Then it is an easy consequence of the characterization of the weighted equilibrium distribution \cite[Thm. I.3.3]{SaffTotik} and the representation of Green potentials via logarithmic potentials \cite[Thm. II.5.1]{SaffTotik} that $\lambda$ as above is, in fact, the {\it balayage} \cite[Sec. II.4]{SaffTotik} of $\nu$ onto $F$, $\supp(\lambda)=F$, and
\begin{equation}
\label{eq:balayage}
(U^\lambda-U^\nu)(z) = -U_D^\nu(z) - \int\log|b(\infty;t)|d\nu(t), \quad z\in \overline\C.
\end{equation}
Thus, the existence of $\partial U^{\lambda-\nu}/\partial n^\pm$ specializes to the existence of $\partial U_D^\nu/\partial n^\pm$.

Let now $\jt$ be the Joukovski transformation and $\Gamma:=\jt^{-1}(F)$. It was explained in Section \ref{subsec:aux_jouk} that $\Gamma$ is a Jordan curve. Denote by $D^+$ and $D^-$ the bounded and unbounded components of the complement of $\Gamma$, respectively. Then
\[
\phi := \psi\circ\jt, \quad \phi:D^+\to\D, \quad \phi(0) = 0, \quad \mbox{and} \quad \phi^\prime(0)>0,
\]
is a conformal map. Set
\begin{equation}
\label{eq:uphi}
u(z) := -(U_D^\nu\circ\jt)(z) = \int \log \left| \frac{\phi(z)-\phi(\tau)}{1-\phi(z)\overline{\phi(\tau)}} \right| d\nu^+(\tau), \quad z\in D^+,
\end{equation}
where $d\nu^+:=d(\nu\circ\jt_{|D^+})$. As $\jt$ is conformal on $\Gamma\setminus\{\pm1\}$, the existence of $\partial u/\partial n$ a.e. on $\Gamma$, where $\partial/\partial n$ is a partial derivative with respect to the inner normal on $\Gamma$, will imply the existence of $\partial U_D^\nu/\partial n^\pm$ a.e. on $F$. As
\[
\frac{\partial u}{\partial n}(\tau) := \lim_{\delta\to0}\left\langle \vec{n}_\tau,\nabla u(\tau+\delta n_\tau)\right\rangle = 2\lim_{\delta\to0}\re\left(n_\tau\frac{\partial u}{\partial z}(\tau+\delta n_\tau)\right),
\]
where $\langle\cdot,\cdot\rangle$ is the usual scalar product in $\R^2$, $\vec{n}_\tau$ is the inner normal vector at $\tau$, $n_\tau$ is the unimodular complex number corresponding to $\vec{n}_\tau$, and $(\partial/\partial z):=((\partial/\partial x)-i(\partial/\partial y))/2$, it is needed to show that the function $\partial u/\partial z$, holomorphic in $D^+\setminus\supp(\nu^+)$, has non-tangential limits a.e. on $\Gamma$. It can be readily verified that
\begin{equation}
\label{eq:duphi}
\frac{\partial u}{\partial z}(z) = \frac{\phi^\prime(z)}{2} \int\frac{1-|\phi(\tau)|^2}{(1-\overline{\phi(\tau)}\phi(z))(\phi(z)-\phi(\tau))}d\nu^+(\tau), \quad z\in D^+.
\end{equation}
Moreover, by construction, we have that $\phi^\prime\in E_+^1$ (see Section \ref{subsec:aux_smirnov} for the definition of the Smirnov class $E_+^1$). Recall (see Section \ref{subsec:aux_jouk}) also that the assumed condition on the behavior of $F$ near the endpoints implies rectifiability of $\Gamma$. Hence, any function in $E_+^1$ has non-tangential limits a.e. on $\Gamma$ by \cite[Thm. 10.3]{Duren}. As $\phi$ extends continuously on $\Gamma$ by Carath\'eodory's theorem \cite[Sec. 2.1]{Pommerenke}, the desired claim on existence of the boundary values of $\partial u/\partial z$ and respectively normal derivatives $\partial U^{\lambda-\nu}/\partial n^\pm$ follows.
\smallskip
\newline
{\bf (a)$\Rightarrow$(b)}: Let $F$ be symmetric with respect to a triangular scheme of points $\E=\{E_n\}$. Denote by $\nu_n$ the counting measures of points in $E_n$ and let $\nu$ be a weak$^*$ limit point of $\{\nu_n\}$. In other words, there exists  a subsequence $\N_1\subset\N$ such that $\nu_n \cws \nu$, $\N_1\ni n\to\infty$. If $\nu$ has bounded support, we shall show that $F$ is symmetric in the field $-U^\nu$. By (\ref{eq:SProperty}) and (\ref{eq:balayage}), this is equivalent to proving that
\begin{equation}
\label{eq:step1}
\frac{\partial U_D^\nu}{\partial n^+} = \frac{\partial U_D^\nu}{\partial n^-} \quad \mbox{a .e. on} \quad F.
\end{equation}
If $\nu$ has unbounded support, we claim that there exists a compactly supported Borel measure $\nu^*$ such that $U_D^{\nu^*}=U_D^\nu$ in some open neighborhood of $F$. Thus, the equality in (\ref{eq:step1}) will be sufficient to show that $F$ is symmetric in the field $-U^{\nu^*}$. To construct $\nu^*$, set $\kappa(z)=1/(z-a)$, $a\notin\supp(\nu)\cup F$. Clearly, $U_D^\nu = U_{\kappa(D)}^{\nu_\kappa}\circ\kappa$, where $d\nu_\kappa=d(\nu\circ\kappa^{-1})$ has compact support. Let $L$ be a Jordan curve in $\kappa(D)$ encompassing $\supp(\nu_\kappa)$. Then the balayage of $\nu_\kappa$ on $L$, say $\nu_\kappa^*$, is such that $U_{\kappa(D)}^{\nu_\kappa} = U_{\kappa(D)}^{\nu_\kappa^*}$ outside of $L$ \cite[Thm. II.4.1]{SaffTotik}. Thus, the claim follows for $d(\nu^*):=d(\nu^*_\kappa\circ\kappa)$.

In order to prove that the equality in (\ref{eq:step1}) indeed holds, we derive different from (\ref{eq:greenpotential}) integral representation for the Green potential of the above measure $\nu$. It follows from (\ref{eq:greenpotential}) that
\[
U_D^{\nu_n}(z) = -\frac{1}{2n}\log |b_n(z)|, \quad b_n(z) := \prod_{e\in E_n} b(e;z), \quad z\in D, \quad n\in\N_1.
\]
Further, since $F$ is symmetric with respect to $\E$, $\{\log|r_n/b_n|\}$ is a sequence of harmonic functions in $D$ whose boundary values uniformly bounded above and away from zero. Hence, we get from the maximum principle for harmonic functions that
\begin{equation}
\label{eq:maxprin}
\frac{1}{2n}\left(\log |r_n(z)| - \log|b_n(z)|\right) \to 0 \quad \mbox{uniformly in} \quad \overline\C.
\end{equation}
Moreover, removing $o(n)$ factors simultaneously from $r_n$ and $b_n$ will not alter this conclusion. Indeed, new difference will remain harmonic and its boundary values on $F$ will still uniformly go to zero since $|b^\pm(t;\cdot)|\equiv1$ and $|r^\pm(t;\cdot)|\leq\const$ for all $t\in\supp(\E)$, where $\const$ depends only on $\supp(\E)$. On the other hand, since $\nu_n$ weakly converge to $\nu$, it is possible to write each $\nu_n$ as a sum of two measures, say $\nu_{n,1}$ and $\nu_{n,2}$, such that $|\nu_{n,2}|=o(n)$ and the supports of $\nu_{n,1}$ are asymptotically contained in any open set around $\supp(\nu)$. Let $r_{n,1}$ and $b_{n,1}$ correspond to $\nu_{n,1}$ as $r_n$ and $b_n$ correspond to $\nu_n$. Then we obtain from the weak$^*$ convergence of $\nu_{n,1}$ to $\nu$ and the remark after (\ref{eq:maxprin}) that
\[
U_D^\nu = \lim_{\N_1\ni n\to\infty} U_D^{\nu_{n,1}} = \lim_{\N_1\ni n\to\infty} -\frac{1}{2n}\log|r_{n,1}|,
\]
locally uniformly in $D\setminus\supp(\nu)$. So, writing $(1/2n)\log|r_{n,1}|$ as an integral of $\log|r(\cdot;z)|$ against $\nu_{n,1}$, we derive from the weak$^*$ convergence of measures that
\begin{equation}
\label{eq:newkernel}
U_D^\nu(z) = -\int\log|r(t;z)|d\nu(t), \quad z\in D\setminus\supp(\nu).
\end{equation}

We proceed by reformulating (\ref{eq:step1}) in terms of the boundary values of the complex-valued function $H:=\partial U_D^\nu/\partial z$. Since $U_D^\nu$ is harmonic in $D\setminus\supp(\nu)$, $H$ is holomorphic there on account of Cauchy-Riemann equations \cite[Sec. 4.6.1]{Ahlfors2}. Moreover, it follows from (\ref{eq:newkernel}) that
\begin{equation}
\label{eq:repH}
H(z) = -\frac{\map^\prime(z)}{2} \int\frac{1-\map^2(\tau)}{(1-\map(\tau)\map(z))(\map(z)-\map(\tau))}d\nu(\tau),
\end{equation}
$z\in D\setminus\supp(\nu)$. Observe further that $H$ extends analytically across each side of $F\setminus\{\pm1\}$. (Note that $\map^\prime=\map/w$ and therefore it extends analytically across each side of $F\setminus\{\pm1\}$ as $\map$ and $w$ obviously do.)

Let now $t\in F_\tau$, where $F_\tau\subset F\setminus\{\pm1\}$ is the set of points at which $F$ possesses tangents. Denote by $\vec{\tau_t}$ and $\vec{n^\pm_t}$ the unit tangent vector and the one-sided unit normal vectors to $F$ at $t$. Further, put $\tau_t$ and $n_t^\pm$ to be the corresponding unimodular complex numbers, $n^+_t = i\tau_t$. Then
\[
\langle\nabla U_D^\nu(z),\vec{n^\pm_t}\rangle = 2\re\left(n_t^\pm ~ H(z)\right), \quad z\in D.
\]
As $H$ extends holomorphically across $F\setminus\{\pm1\}$, the above equality also holds at $z=t$. In other words, we have that
\begin{equation}
\label{eq:step2}
\frac{\partial U_D^\nu}{\partial n^\pm}(t) = 2\re\left(n^\pm_t H^\pm(t)\right) = 2n^\pm_t H^\pm(t), \quad t\in F_\tau.
\end{equation}
The last equality in (\ref{eq:step2}) is valid because $U_D^\nu\equiv0$ on $F$ and therefore
\[
0 = \frac{\partial U_D^\nu}{\partial \tau}(t) = \mp2\im\left(n^\pm_t H^\pm(t)\right), \quad t\in F_\tau.
\]
As $n^+_t=-n^-_t$, (\ref{eq:step1}) will follow from (\ref{eq:step2}) if we show that
\begin{equation}
\label{eq:step3}
H^+ = - H^- \quad \mbox{on} \quad F_\tau.
\end{equation}
The latter is well-understood in the theory of symmetric contours \cite[pg. 335]{GRakh87} and can be seen as follows.

Observe that $(\map^\pm)^\prime = (\map^\prime)^\pm$ on $F\setminus\{\pm1\}$ since $\map$ extends holomorphically across each side of $F\setminus\{\pm1\}$. Therefore, we have by (\ref{eq:boundary}) that
\[
(\map^-)^\prime = \frac{\map^-}{w^-} = \frac{-1}{(\map^+)^2}\frac{\map^+}{w^+} = -\frac{(\map^+)^\prime}{(\map^+)^2} \quad \mbox{on} \quad F\setminus\{\pm1\}.
\]
Then by (\ref{eq:repH}) we have for the unrestricted boundary values of $H$ on each side of $F$ that
\begin{eqnarray}
H^+(t) &=& -\frac{(\map^+)^\prime(t)}{2} \int \frac{1-\map^2(\tau)}{(1-\map(\tau)\map^+(t))(\map^+(t)-\map(\tau))}d\nu(\tau) \nonumber \\
{} &=& -\frac{(\map^+)^\prime(t)}{2(\map^+)^2(t)} \int \frac{1-\map^2(\tau)}{(\map^-(t)-\map(\tau))(1-\map(\tau)\map^-(t))}d\nu(\tau) = -H^-(t), \nonumber
\end{eqnarray}
which finishes this part of the proof.
\smallskip
\newline
{\bf (b)$\Rightarrow$(c)}: Let $F$ be symmetric in the field $-U^\nu$, where $\nu$ is a positive Borel measure compactly supported in $D$. We show that there exists a univalent function $p$ holomorphic in some neighborhood of $[-1,1]$ such that $F=p([-1,1])$.

Let $\Gamma$, $D^+$, and $D^-$ be as before and denote by $\Omega$ the domain in $\overline\C$ such that $\jt(\Omega)=\overline\C\setminus\supp(\nu)$. Set
\[
u(z) = \mp U_D^\nu(\jt(z)), \quad z\in\Omega\cap D^\pm.
\]
Then $u$ is identically zero on $\Gamma$, harmonic and negative in $D^+$, and harmonic and positive in $D^-$. Moreover, $u_{|D^+}$ has integral representation (\ref{eq:uphi}) and, as explained after (\ref{eq:duphi}), it has well-defined non-tangential boundary values, say $(\partial u/\partial z)^+$, that are integrable on $\Gamma$. Since $u(1/z)=-u(z)$ and $D^-=\{1/z:~z\in D^+\}$, the trace $(\partial u/\partial z)^-$ also exists and belongs to $L^1$.

In another connection, it is easy to verify that
\begin{equation}
\label{eq:uhj}
(\partial u/\partial z)(z) = \mp H(\jt(z))\jt^\prime(z), \quad z\in\Omega\cap D^\pm, \quad H:=\partial U_D^\nu/\partial z,
\end{equation}
is a sectionally holomorphic function. Let $\map$ be given by (\ref{eq:map}). Then $\Gamma$ is the boundary of $\map(D)$ and, as before, we set $\Gamma^\pm=\map^\pm(F)$. Since (\ref{eq:SProperty}) is equivalent to (\ref{eq:step3}) as explained in the previous part of the proof, we obtain for a.e. $\tau\in\Gamma^-$ that
\[
\left(\partial u/\partial z\right)^+(\tau) = -H^+(\jt(\tau))\jt^\prime(\tau) = H^-(\jt(\tau))\jt^\prime(\tau) = \left(\partial u/\partial z\right)^-(\tau).
\]
Analogously, we can show that $(\partial u/\partial z)^+$ coinsides with $(\partial u/\partial z)^-$ a.e. on $\Gamma^+$ and therefore a.e. on $\Gamma$. It follows now from an application of the Cauchy formula that $\partial u/\partial z$ is analytic across $\Gamma$, i.e. $\partial u/\partial z$ is a holomorphic function in the whole $\Omega$.

Recall that $u$ is identically zero on $\Gamma$ and does not vanish anywhere else in $\overline\C$. Hence, the level lines $\Gamma_\epsilon:=\{z:~u(z)=\epsilon\}$ are single Jordan curves for all $\epsilon$ sufficiently close to zero. Let $L$ be such a level line in $D^+$ and $\Omega_L$ be the annular domain bounded by $L$ and $L^{-1}:=\{z:~1/z\in L\}$. Observed that by Sard's theorem on regular values of smooth functions we always can assume that $L$ is smooth. Since $u$ is constant on $L$, the tangential derivative of $u$ there is zero and we get as in (\ref{eq:step2}) that
\begin{equation}
\label{eq:normalonL}
\frac{\partial u}{\partial n}(\tau) = \langle \nabla u(\tau), \vec{n}_\tau \rangle = 2n_\tau\frac{\partial u}{\partial z}(\tau), \quad \tau\in L,
\end{equation}
where $\partial/\partial n$ is the derivative in the direction of the inner normal with respect to $\Omega_L$, $\vec{n}_\tau$, and $n_\tau$ is the corresponding unimodular complex number. As $-u$ is the Green potential of a probability measure $\nu^+$ (see (\ref{eq:uphi})), $L$ is smooth, and $d\tau=in_\tau ds$ on $L$, it follows from (\ref{eq:normalonL}) and Gauss' theorem \cite[Thm. II.1.1]{SaffTotik} that
\begin{equation}
\label{eq:rightwinding}
2\pi i = 2\pi i \nu^+(D^+) = i \int_L\frac{\partial u}{\partial n}ds = 2\int_L\frac{\partial u}{\partial z}(\tau)d\tau.
\end{equation}
Furthermore, established for $L$, the chain of equalities above holds, in fact, for any Jordan curve contained in $\Omega_L$ and homologous to $L$ since $\partial u/\partial z$ is analytic on $\overline\Omega_L$, where we always take the outer normal with respect to the inner domain of that Jordan curve. Thus, a function
\begin{equation}
\label{eq:defPsi}
\Psi(z) := \exp\left\{2\int_1^z\frac{\partial u}{\partial z}(\tau)d\tau\right\}, \quad z\in\Omega_L,
\end{equation}
is well-defined and holomorphic in $\Omega_L$, where the integral is taken over any path joining $1$ and $z$ and lying entirely in $\Omega_L$.

Now, observe that the maximum principle for harmonic functions yields that $\partial u/\partial n$ is non-negative on each $\Gamma_\epsilon$ contained in $\Omega_L$. Set $\gamma_\epsilon(\tau)$ to be the subarc of $\Gamma_\epsilon$ obtained by traversing it into the counter-clockwise direction from $x_\epsilon$ to $\tau$, where $\{x_\epsilon\}:=\Gamma_\epsilon\cap(0,\infty)$. Then $\int_{\gamma_\epsilon(\tau)}(\partial u/\partial n)ds$ is a positive strictly increasing function of $\tau$ with the range $[0,2\pi]$ as $\tau$ winds once on $\Gamma_\epsilon$ in the positive direction starting from 1 by (\ref{eq:rightwinding}) and the remark right after. Hence, choosing the initial path of integration to be $\gamma(\tau):=[1,x_\epsilon]\cup\gamma_\epsilon(\tau)$, we derive as in (\ref{eq:rightwinding}) that
\begin{eqnarray}
\log|\Psi(\tau)| &=& \re\left(2\int_{\gamma(\tau)}\frac{\partial u}{\partial z}(\zeta)d\zeta\right)  = \re\left(2\int_1^{x_\epsilon}\frac{\partial u}{\partial z}(\zeta)d\zeta + i \int_{\gamma_\epsilon(\tau)}\frac{\partial u}{\partial n}ds\right) \nonumber \\
&=& 2\re\left(\int_1^{x_\epsilon}\frac{\partial u}{\partial z}(\zeta)d\zeta\right) =: \log\rho_\epsilon, \nonumber
\end{eqnarray}
and
\[
\Arg(\Psi(\tau)) = \im\left(2\int_{\gamma(\tau)}\frac{\partial u}{\partial z}(\zeta)d\zeta\right) = 2\im\left(\int_1^{x_\epsilon}\frac{\partial u}{\partial z}(\zeta)d\zeta\right) + \int_{\gamma_\epsilon(\tau)}\frac{\partial u}{\partial n}ds,
\]
$\tau\in\Gamma_\epsilon$, where $\Arg(\Psi)$ is the principal value of the argument of $\Psi$. Hence, $\Psi$ is univalent on each $\Gamma_\epsilon\subset\Omega_L$ and $\Psi(\Gamma_\epsilon)=\T_{\rho_\epsilon}$ (in particular, $\Psi(\Gamma)=\T$ and $\Psi(\pm1)=\pm1$). As each level line $\Gamma_\epsilon$ lies either entirely inside of $\Omega_L$ or entirely outside, $\Psi$ is a univalent function in the whole domain $\Omega_L$. It also holds that
\begin{equation}
\label{eq:recip}
\Psi(1/z)=1/\Psi(z), \quad z\in\Omega_L,
\end{equation}
as follows from a change of variables in (\ref{eq:defPsi}) and since
\[
(\partial u/\partial z)(1/z) = \pm H(\jt(1/z))\jt^\prime(1/z) = \mp z^2 H(\jt(z))\jt^\prime(z) = z^2(\partial u/\partial z)(z),
\]
$z\in \Omega_L\cap D^\pm$, by (\ref{eq:uhj}).

Set $f:=\jt\circ\Psi\circ\map$. Since $\map$ is univalent and holomorphic in $\jt(\Omega_L)\setminus F$, $\map(\jt(\Omega_L)\setminus F)=\Omega_L\cap D^-$, and $\Psi$ is injective and holomorphic in $\Omega_L$, $f$ is also univalent and holomorphic in $\jt(\Omega_L)\setminus F$. As $\map^\pm(F)=\Gamma^\pm$ and $\Psi(\Gamma^\pm)=\T^\pm$, it holds that $f^+$ and $f^-$ both map $F$ onto $[-1,1]$. It is also true that
\begin{eqnarray}
f^-(t) &=& \jt(\Psi(\map^-(t))) = \jt(\Psi(1/\map^+(t))) = \jt(1/\Psi(\map^+(t))) \nonumber \\
{}     &=& \jt(\Psi(\map^+(t))) = f^+(t), \quad t\in F,\nonumber
\end{eqnarray}
by (\ref{eq:boundary}) and (\ref{eq:recip}). Hence, $f$ is a holomorphic and univalent function in some $\jt(\Omega_L)$ that maps $F$ onto $[-1,1]$ and therefore the desired analytic parametrization of $F$ is given by $p=f^{-1}$.
\smallskip 
\newline
{\bf (c)$\Rightarrow$(a)}: Let $F$ be an analytic Jordan arc and $p$ be its holomorphic univalent parametrization. Denote, as usual, by $\Gamma$ the preimage of $F$ under the Joukovski transformation $\jt$. It was shown in Section \ref{subsec:aux_jouk} that there exists a function $\Psi$, holomorphic in some neighborhood of $\Gamma$, say $\Omega$, such that $\Psi(\Gamma)=\T$ and
\begin{equation}
\label{eq:reciprocal}
\Psi(1/z) = 1/\Psi(z), \quad z,1/z\in\Omega.
\end{equation}
In fact, $\Phi=\Psi^{-1}$ was constructed in Section \ref{subsec:aux_jouk}. Let $\rho\in(0,1)$ be such that 
\[
C_\rho(0),C_{1/\rho}(0)\subset\Psi(\Omega), \quad C_x(z_0) := \{z\in\C:~|z-z_0|=x\}, \quad x>0.
\]
Denote $\Gamma_\rho := \Phi(C_\rho(0))$ and $\Gamma_{1/\rho} := \Phi(C_{1/\rho}(0))$. It immediately follows from (\ref{eq:reciprocal}) that
\begin{equation}
\label{eq:reciprboundary}
\Gamma_{1/\rho} = \Gamma_\rho^{-1} = \{\tau\in\C:~1/\tau\in\Gamma_\rho\}.
\end{equation}
Denote by $\Omega_\rho$ the annular domain bounded by $\Gamma_\rho$ and $\Gamma_{1/\rho}$ and define
\[
u(z) := \log|\Psi(z)|, \quad z\in\Omega.
\]
Then $u$ is a harmonic function in some neighborhood of $\overline\Omega_\rho$ such that $u\equiv0$ on $\Gamma$, $u\equiv\log\rho$ on $\Gamma_\rho$, and $u\equiv-\log\rho$ on $\Gamma_{1/\rho}$. Furthermore, it follows right away from the definition of $u$ that
\begin{equation}
\label{eq:uderivative}
\frac{\partial u}{\partial z}(z) = \frac12\frac{\Psi^\prime(z)}{\Psi(z)}, \quad z\in\Omega.
\end{equation}
Let now $n_\tau$ stand for the unimodular complex number corresponding to the inner normal of $\Omega_\rho$ at $\tau\in\partial\Omega_\rho$. Since inner normals of $\partial\Psi(\Omega_\rho)$ are represented by $\xi/|\xi|$, $\xi\in C_\rho(0)$, and $-\xi/|\xi|$, $\xi\in C_{1/\rho}(0)$, we have by conformality at $\xi$ that
\begin{equation}
\label{eq:innernormals}
\pm\frac{\Psi(\tau)}{|\Psi(\tau)|} = \frac{\Psi^\prime(\tau)}{|\Psi^\prime(\tau)|} ~ n_\tau, \quad \tau\in\Gamma_{\rho^{\pm1}}.
\end{equation}
As $u$ is harmonic in a neighborhood of $\overline\Omega_\rho$, we deduce from (\ref{eq:uderivative}) and (\ref{eq:innernormals}) that
\begin{equation}
\label{eq:normalderivative}
\frac{\partial u}{\partial n}(\tau) = \langle\nabla u(\tau),\vec{n}_\tau\rangle = 2\re\left(\frac{\partial u}{\partial z}(\tau)n_\tau\right)  = \pm\rho^{\mp1}|\Psi^\prime(\tau)|, \quad \tau\in\Gamma_{\rho^{\pm1}},
\end{equation}
where $\vec{n}_\tau$ is the unit vector corresponding to the complex number $n_\tau$. Observe that
\[
\frac{\Psi^\prime(1/z)}{z^2} = \frac{\Psi^\prime(z)}{\Psi^2(z)}, \quad z,1/z\in\Omega,
\]
by (\ref{eq:reciprocal}). Hence, it follows from (\ref{eq:reciprboundary}) and (\ref{eq:normalderivative}) that
\begin{eqnarray}
\frac{\partial u}{\partial n}(1/\tau) &=& -\rho|\Psi^\prime(1/\tau)| = -\rho\left|\frac{\tau^2}{\Psi^2(\tau)}~\Psi^\prime(\tau)\right| = -\rho^{-1}|\tau|^2|\Psi^\prime(\tau)| \nonumber \\
\label{eq:normaldersym}
{} &=& -|\tau|^2\frac{\partial u}{\partial n}(\tau), \quad \tau\in\Gamma_\rho.
\end{eqnarray}

Now, we shall show that (\ref{eq:normaldersym}) implies the representation formula
\begin{equation}
\label{eq:urepresentation}
u(z) = \int_{\Gamma_\rho}\log\left|\frac{z-\tau}{1-z\tau}\right||\Psi^\prime(\tau)|\frac{ds}{2\pi\rho}, \quad z\in\Omega_\rho,
\end{equation}
where $ds$, as before, is the arclength differential. Before we do so, let us gather several facts that are easy consequences of the following version of the Green's formula. Let $O$ be a bounded open set with smooth boundary $\partial O$, and let $a$ and $b$ be two harmonic functions in some neighborhood of $\overline O$. Then
\begin{equation}
\label{eq:greensformula}
\int_{\partial O}a\frac{\partial b}{\partial n}ds = \int_{\partial O}b\frac{\partial a}{\partial n}ds,
\end{equation}
where $\partial/\partial n$ denotes differentiation in the direction of the inner normal of $O$. In what follows, all the partial derivatives are taken with respect to the inner normals of the corresponding domains.

Let $L$ be a smooth Jordan curve and $\Omega_L$ be the bounded component of its complement. If $a$ is a harmonic function on some neighborhood of $\overline\Omega_L$ then (\ref{eq:greensformula}) immediately implies that
\begin{equation}
\label{eq:fact1}
\int_L \frac{\partial a}{\partial n}ds = 0.
\end{equation}
Further, denote by $v_z$ the function $-\log|z-\cdot|$. Then
\begin{equation}
\label{eq:fact2}
\int_L \frac{\partial v_z}{\partial n}ds =  \left\{
\begin{array}{ll}
2\pi, & z\in\Omega_L, \\
0,    & z\notin\overline\Omega_L.
\end{array}
\right.
\end{equation}
Indeed, (\ref{eq:fact2}) follows from (\ref{eq:greensformula}) applied with $a\equiv1$ and $b=v_z$, by putting $O=\Omega_L$ in the second instance and $O=\Omega_L\setminus\overline D_z$ in the first, where is $\overline D_z$ is any closed disk around $z$ contained in $\Omega_L$.
Thus, (\ref{eq:greensformula}) and (\ref{eq:fact2}) yield that
\begin{eqnarray}
\int_{\partial\Omega_\rho}v_0\frac{\partial u}{\partial n}ds &=& \int_{\partial\Omega_\rho}u\frac{\partial v_0}{\partial n}ds = \log\rho\left(\int_{\Gamma_\rho}\frac{\partial v_0}{\partial n}ds - \int_{\Gamma_{1/\rho}}\frac{\partial v_0}{\partial n}ds\right) \nonumber \\
\label{eq:help1}
{} &=& -4\pi\log\rho.
\end{eqnarray}
Moreover, we deduce from (\ref{eq:normaldersym}) that
\begin{equation}
\label{eq:help2}
\int_{\Gamma_{1/\rho}} v_0\frac{\partial u}{\partial n}ds = \int_{\Gamma_\rho}v_0(1/\tau)\frac{\partial u}{\partial n}(1/\tau)\frac{ds}{|\tau|^2} = \int_{\Gamma_\rho}v_0\frac{\partial u}{\partial n}ds.
\end{equation}
So, we obtain from (\ref{eq:help1}) and (\ref{eq:help2}) that
\begin{equation}
\label{eq:fact3}
\int_{\Gamma_\rho}v_0\frac{\partial u}{\partial n}ds = \frac12\int_{\partial\Omega_\rho}v_0\frac{\partial u}{\partial n}ds = -2\pi\log\rho.
\end{equation}

Finally, let $O$ be $\Omega_\rho$ with the closed disk of radius $x$ around $z\in\Omega_\rho$ removed, $a=u$, and $b=v_z$. Then
\begin{eqnarray}
\int_{\partial O}u\frac{\partial v_z}{\partial n}ds &=& \log\rho\int_{\Gamma_\rho}\frac{\partial v_z}{\partial n}ds - \log\rho\int_{\Gamma_{1/\rho}}\frac{\partial v_z}{\partial n}ds -\frac1x\int_{C_x(z)}uds \nonumber \\
\label{eq:side1}
{} &=& -2\pi\log\rho -2\pi u(z)
\end{eqnarray}
by (\ref{eq:fact2}) and the mean-value property of harmonic functions. On the other hand, we get that
\begin{eqnarray}
\int_{\partial O}v_z\frac{\partial u}{\partial n}ds &=& \int_{\Gamma_\rho}v_z\frac{\partial u}{\partial n}ds + \int_{\Gamma_{1/\rho}}v_z\frac{\partial u}{\partial n}ds - \log x\int_{C_x(z)}\frac{\partial u}{\partial n}ds \nonumber \\
{} &=& \int_{\Gamma_\rho}(v_z(\tau)-v_z(1/\tau))\frac{\partial u}{\partial n}ds = \int_{\Gamma_\rho}\log\left|\frac{1-z\tau}{z-\tau}\right|\frac{\partial u}{\partial n}ds \nonumber \\
\label{eq:side2}
{} && + \int_{\Gamma_\rho}v_0\frac{\partial u}{\partial n}ds = \int_{\Gamma_\rho}\log\left|\frac{1-z\tau}{z-\tau}\right|\frac{\partial u}{\partial n}ds - 2\pi\log\rho,
\end{eqnarray}
where we used (\ref{eq:fact1}), (\ref{eq:normaldersym}), and (\ref{eq:fact3}). Thus, combining (\ref{eq:side1}) and (\ref{eq:side2}) with (\ref{eq:greensformula}) and (\ref{eq:normalderivative}), we get (\ref{eq:urepresentation}).

Now, we shall construct a triangular scheme such that $F$ is symmetric with respect to it. Obviously, this is equivalent to showing that there exists a sequence of sets $\{\hat E_n\}$, $\hat E_n=\{e_{j,n}\}_{j=1}^{2n}\subset D^+$, where $D^+$ is the interior of $\Gamma$, such that $|\hat r_n|=O(1)$ on $\Gamma$ and $|\hat r_n|=o(1)$ locally uniformly in $D^+$, where
\[
\hat r_n (z) = \prod_{e\in \hat E_n}\frac{z-e}{1-ez}.
\]
Set $\varrho:=\const|\Psi^\prime|$, $\int_{\Gamma_\rho}\varrho ds = 1$. Further, for each $n\in\N$, let $\{\Gamma_\rho^{j,n}\}_{j=1}^{2n}$ be a partition of $\Gamma_\rho$ into $2n$ simple  arcs that are pairwise disjoint except for possible common endpoints and satisfy
\[
\int_{\Gamma_\rho^{j,n}} \varrho ds=\frac{1}{2n}, \quad j=1,\ldots,2n.
\]
Clearly, it holds that
\begin{equation}
\label{eq:existEn1}
\diam(\Gamma_\rho^{j,n}) \leq \int_{\Gamma_\rho^{j,n}}ds \leq \frac{\const}{n}, \quad j=1,\ldots,2n,
\end{equation}
since $\inf_{\Gamma_\rho}\varrho>0$, where $\const$ is an absolute constant. It also holds that
\begin{equation}
\label{eq:existEn3}
0 = \int_{\Gamma_\rho} k_z(\tau)\varrho ds, \quad k_z(\tau):= \log\left|\frac{z-\tau}{1-z\tau}\right|,
\end{equation}
for any $z\in\Gamma$ by (\ref{eq:urepresentation}). Moreover, for each $z\in\Gamma$ and $\tau_1,\tau_2\in\Gamma_\rho$ we have that
\begin{equation}
\label{eq:existEn2}
|k_z(\tau_1)-k_z(\tau_2)| = \left|\log\left|1+\frac{(\tau_2-\tau_1)(1-z^2)}{(z-\tau_2)(1-z\tau_1)}\right|\right| \leq O(|\tau_2-\tau_1|),
\end{equation}
where the last bound does not depend on $z$.

Now, let $e_{j,n}$ be an arbitrary point belonging to  $\Gamma_\rho^{j,n}$, $j=1,\ldots,2n$, $n\in\N$. Then it follows from (\ref{eq:existEn1}) and (\ref{eq:existEn2}) that for any $z\in \Gamma$ and $\tau\in\Gamma_\rho^{j,n}$, $j=1,\ldots,2n$, we get
\begin{equation}
\label{eq:existEn4}
|k_z(\tau)-k_z(e_{j,n})| \leq O(\diam(\Gamma_\rho^{j,n})) \leq \frac{\const}{n},
\end{equation}
where $\const$ is, again, a constant independent of $z$, $j$, and $n$. Therefore, we see that the functions
\[
u_n(z) := \frac{1}{2n}\sum_{j=1}^{2n} k_z(e_{j,n}) = \sum_{j=1}^{2n} \int_{\Gamma_\rho^{j,n}} k_z(e_{j,n})\varrho ds
\]
are such that
\begin{eqnarray}
\|u_n\|_\Gamma &=& \left\|\sum\int_{\Gamma_\rho^{j,n}} \left( k_z(\tau) - k_z(e_{j,n}) \right) \varrho ds\right\|_{z\in\Gamma} \nonumber \\
{} &\leq& \sum \int_{\Gamma_\rho^{j,n}} \frac{\const}{n} \varrho ds = \frac{\const}{n} \nonumber
\end{eqnarray}
by (\ref{eq:existEn3}) and (\ref{eq:existEn4}). Finally, observe that $|\hat r_n| = \exp\{2n u_n\} = O(1)$ on $\Gamma$. This finishes the proof of the theorem since $\hat E_n\subset\Gamma_\rho\subset D^+$ and therefore $|\hat r_n|=o(1)$ locally uniformly in $D^+$ by the maximum principle and a normal family argument.
\boite

\section{Examples}
\label{sec:numerics}

Denote by $F_\alpha$, $\alpha\in\R$, the following set
\[
F_\alpha := \left\{\frac{i\alpha+x}{1+i\alpha x}:~ x\in[-1,1]\right\}, \;\;\; F_\alpha^{-1} := \left\{z:~ 1/z\in F_\alpha\right\}.
\]
Clearly, $F_\alpha$ is an analytic arc joining $-1$ and $1$ and therefore is symmetric with respect to some triangular scheme by Theorem \ref{thm:sp}. However, it can be easily computed that $|r^\pm(e;t)| \equiv 1$, $t\in F_\alpha$, if $e\in F_\alpha^{-1}\setminus\{\pm1\}$. Hence, if $\E=\{E_n\}$, $E_n=\{e_{j,n}\}_{j=1}^{2n}$, is such that $e_{j,n}\in F_\alpha^{-1}\setminus\{\pm1\}$ for all $n\in\N$ and $j\in\{1,\ldots,2n\}$, then $F$ will be symmetric with respect to $\E$. Further, it can be easily checked that for any $e\notin F_\alpha$, it holds that
\[
|r^+(e;t)r^+(e^*;t)| \equiv |r^-(e;t)r^-(e^*;t)| \equiv 1, \quad t\in F_\alpha,
\]
where
\[
e^* := \frac{2i\alpha+(1-\alpha^2)\bar e}{(1-\alpha^2)+2i\alpha\bar e}.
\]
Thus, if $\E=\{E_n\}$ is such that each $E_n$ contains $e$ and $e^*$ simultaneously, then $F$ will be symmetric with respect to $\E$. Finally, observe also that for $\alpha=0$ we get that $F_0=[-1,1]$ and $e^*=\bar e$.

Below, we plot zeros of polynomials orthogonal on $F_{-1/2}$ with respect to the weights $w_n(t) = e^tt^{-n}(t+4i/3)^{-n}$. In other words, these are the poles of the multipoint Pad\'e approximants to $f_\mu$, $d\mu(t)=ie^tdt/w^+(t)_{|F_{-1/2}}$, that corresponds to the interpolation scheme with half of the points at 0 and another half at $-4i/3$. The denominators of such approximants are constructed by solving orthogonality relations (\ref{eq:orthogonality}). Thus, finding the denominator of the approximants of degree $n$ amounts to solving system of linear equations whose coefficients are obtained from the moments of the measures $t^{-n}(t+4i/3)^{-n}d\mu(t)$. The computations were carried with MAPLE 9.5 software using 24 digits precision.

\begin{figure}[h!]
\centering
\includegraphics[scale=.4]{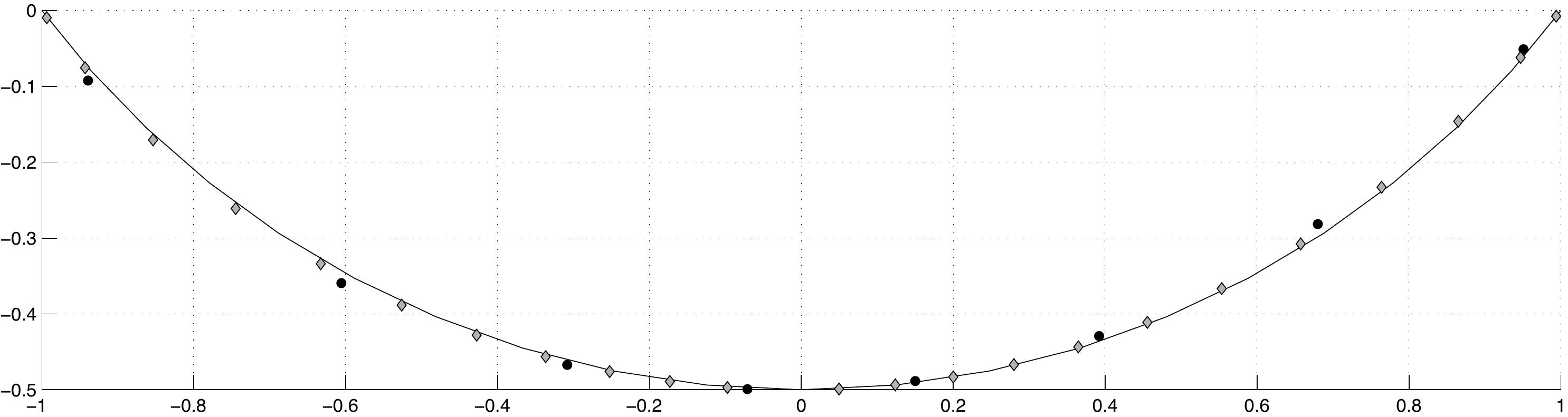}
\caption{\small Zeros of $q_8$ (disks) and $q_{24}$ (diamonds).}
\end{figure}

In the following example the contour $F$ is generated by $e_1:=(i-3)/4$, $e_2:=(87+6i)/104$, and $e_3:=-i/10$, in the sense that
\[
\left|\left(r(e_1;t)r(e_2;t)r(e_3;t)\right)^\pm\right| \equiv 1,
\]
and is computed numerically. Thus, $F$ is symmetric with respect to any triangular scheme such that $E_{3m}$ consists of $e_1$, $e_2$, and $e_3$ appearing $m$ times each, and $E_{3m+1}$ ($E_{3m+2}$) is obtained by adding to $E_{3m}$ an (two) arbitrary point (points) from $D$. Based on the derived discretizations of $F$, the moments of the measures $[(t-e_1)(t-e_2)(t-e_3)]^{-2m}h(t)dt$, $n=3m$, are computed for $n=24$ and $n=66$, where $h(t)=t$ if $\im(t)\geq0$ and $h(t)=\bar t$ otherwise, and the coefficients of the corresponding orthogonal polynomials are found by solving the respective linear systems. The computations were carried with MAPLE 9.5 software using 52 digits precision.

\begin{figure}[h!]
\centering
\includegraphics[scale=.4]{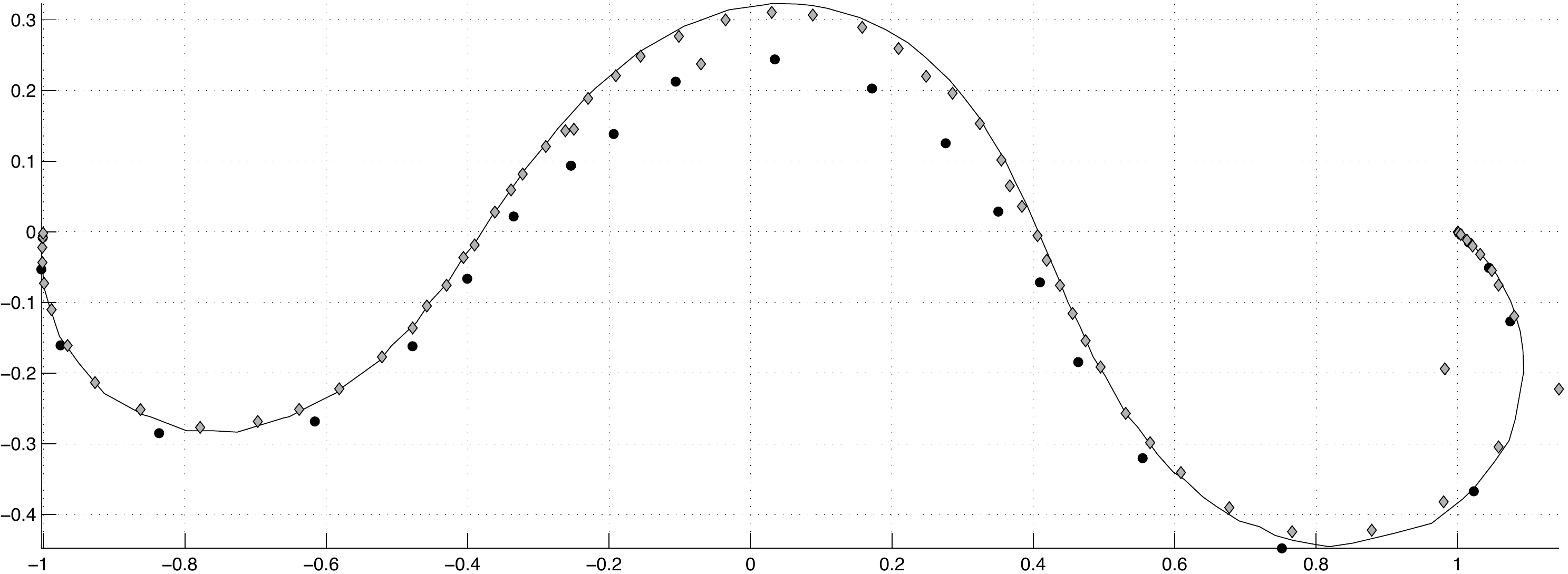}
\caption{\small Zeros of $q_{24}$ (disks) and $q_{66}$ (diamonds).}
\end{figure}

\bibliographystyle{plain}
\small
\bibliography{ci}

\end{document}